\newtheorem{definition}{Definition}[section]
\newtheorem{Theorem}{Theorem}[section]
\newtheorem{Proposition}{Proposition}[section]
\newtheorem{Problem}{Problem}[section]
\newtheorem{Definition}{Definition}[section]
\newtheorem{Ex}{Example}[section]
\newtheorem{Remark}{Remark}[section]
\theoremstyle{remark}
\newcommand{\be}{\begin{equation}}
\newcommand{\ee}{\end{equation}}
\newcommand{\R}{\mathbb{R}}\newcommand{\Id}{\textrm{\rm Id}}
\newcommand{\gl}{\mathrm{gl}}
\newcommand{\ddd}{\mathrm{d}}
\newcommand{\dd}{{\mathrm d}\,}
\newcommand{\tr}{\operatorname{tr}}
\newcommand{\trace}{\operatorname{tr}}
\newcommand{\weg}[1]{}
\title[Research problems on
Nijenhuis   geometry and integrable systems]{Research problems on relations between Nijenhuis   geometry and integrable systems}
\author{Alexey V. Bolsinov}\address{A.B.: School of Mathematics,
 Loughborough University,
 LE11 3TU, UK }\email{\tt A.Bolsinov@lboro.ac.uk}  
\author{Andrey Yu. Konyaev}\address{A.K.: Faculty of Mechanics and Mathematics, Moscow State University, 119992, Moscow, Russia}
 \email{\tt  maodzund@yandex.ru}
 \author{Vladimir S. Matveev}\address{V.M.:
Institut f\"ur Mathematik, Friedrich Schiller Universit\"at Jena,
07737 Jena, Germany} \email{\tt  vladimir.matveev@uni-jena.de} 
\date{}
\begin{document}

\begin{abstract} The paper surveys open problems and questions
related to interplay between the theory of integrable systems with infinitely and 
finitely many degrees of freedom and Nijenhuis geometry.
This text has grown out  from  preparatory materials  for the series of research symposia and workshops on Nijenhuis geometry and integrable systems held at SMRI (Sydney)  and  MATRIX (Creswick) in February 2022  and at La Trobe University (Melbourne) and MATRIX in February 2024, and from the open problem sessions at these events.  
It  includes both relatively simple questions to get familiar with the topic, as well as challenging problems that are of great importance for the field. 

\vspace{2ex}

\noindent {\bf MSC Class:} 37K06; 37K10; 37K25; 37K50; 53B10; 53A20; 53B20; 53B30; 53B50; 53B99; 53D17; 53D20; 53D22; 37J06; 37J11; 37J35; 70H06. 
\end{abstract}

\maketitle
\tableofcontents

\section{Basics of Nijenhuis Geometry} \label{Intro}

Basic facts in this area are systematically presented in the series of four papers
\cite{nij1, nij2, nij3, nij4} (to be, perhaps, continued).   The paper \cite{nij1} is introductory and, we hope, reader-friendly.  In addition to well known classical facts,  it introduces several new concepts as well as some new terminology (e.g. related to singularities of Nijenhuis operators).  It also contains several new (sometimes quite surprising) results.  Below we briefly recall some useful info following \cite{nij1}.  

\subsection{What does Nijenhuis geometry study?}
The main object in Nijenhuis Geometry are  $(1,1)$-tensor fields. We also call   them {\it operator fields} or simply {\it operators}.    

\begin{Definition}\label{def:nijop}
{\rm
A \emph{Nijenhuis operator} is a $(1,1)$-tensor $L$ with vanishing Nijenhuis torsion $\mathcal{N}_L$, that is, 
$$
 \mathcal{N}_L(v, w) \overset{\mathrm{def}}{=} L^2 [v, w] + [Lv, Lw] - L [Lv, w] - L [v, Lw] =0 
$$
for arbitrary vector fields $v$ and $w$. 

A manifold $M$ endowed with such an operator is called a {\it Nijenhuis manifold}.   
}\end{Definition} 

Notice that vanishing of the Nijenhuis torsion is the simplest geometric condition on a $(1,1)$-tensor, and this is the reason why Nijenhuis operators naturally appear in many areas of geometry, mathematical physics and even algebra.

The ultimate goal of Nijenhuis Geometry  is to answer the following natural  questions:  

\begin{enumerate} 

\item Local description: to what form can one bring a Nijenhuis operator near almost every point by a local coordinate change?  \label{q:1} 

\item Singular points: what does it mean for a point to be generic or singular in the context of Nijenhuis  geometry?
What singularities are non-degenerate? What singularities are stable?  How do Nijenhuis operators behave near non-degenerate and stable singular points?  
 
 \item Global properties: what restrictions on 
a Nijenhuis operator are imposed by compactness of the manifold?  And conversely,  what are topological obstructions to a Nijenhuis manifold carrying a Nijenhuis operator with specific properties (e.g. with no singular points)?    \label{q:3}

\end{enumerate}

 The relation of Nijenhuis geometry to geometric  theory of finite- and infinite-dimensional  integrable systems was observed independently in different setups, and also was touched in the series of ``Application of Nijenhuis Geometry'' papers \cite{nijapp1, nijapp2, nijapp3, nijapp4, nijappl5}.   In the next subsections \S\S\ref{sec:sing}, \ref{sec:benenti}, we list some  already obtained results related to different aspects of Nijenhuis geometry which are expected to be useful in  the theory of integrable systems.    The general philosophy behind many open problems/questions in this paper is to apply already obtained results on the above general goals (\ref{q:1}-\ref{q:3}) in the theory of integrable systems. In the next subsections we give an overview of results that might be interested for applications.

\subsection{Singular points in Nijenhuis geometry} \label{sec:sing}

Let $L$ be a Nijenhuis operator on a smooth manifold $M$.  At each point $p\in M$,  we can define the {\it algebraic type}  (or {\it Segre characteristic}) of  $L(p):  T_pM \to T_pM$  as the structure of its Jordan canonical form which is characterised by the sizes of Jordan blocks related to each eigenvalue $\lambda_i$ of $L(p)$ (the specific values of $\lambda_i$'s are not important here).

\begin{Definition}\label{def:alggen}
{\rm
A point $p\in M$ is called {\it algebraically generic}, if the algebraic type of $L$ does not change in some neighbourhood $U(p)\subset M$.  In such a situation, we will also say that $L$ is algebraically generic at $p\in M$.
}\end{Definition}

\begin{Definition} \label{def:singpoint}
{\rm
 A point  $p\in M$ is called {\it singular}, if it is not algebraically generic.   
}\end{Definition}

Note that the manifold and the Nijenhuis operator are always  assumed to be as smooth as necessary, say  $C^\infty$-smooth or even analytic.  The word ``singular'' is used as an antonym to ``generic'', and the meaning of  algebraically generic is above:   the sizes of Jordan blocks do not change in a small neighborhood.  We also always assume that the manifold is connected.

It follows immediately from the continuity of $L$ that algebraically generic points form an open everywhere dense subset $M_\circ\subseteq M$.

Let us emphasise that in the context of our paper,  ``singular'' is a geometric property and refers to a bifurcation of the algebraic type of $L$  at a given point $p\in M$.   In algebra,  the property of ``being singular or regular''  is usually understood in the context of the representation theory.

\begin{Definition}
\label{def:algregular}
{\rm
An operator  $L(p)$   (and the corresponding point $p\in M$) is called  {\it $\mathrm{gl}$-regular}, if its $GL(n)$-orbit $\mathcal O(L(p))=\{ XL(p)X^{-1}~|~ X\in GL(n)\}$ has maximal dimension, namely, $\dim \mathcal O(L(p)) = n^2- n $.  Equivalently,  this means that the geometric multiplicity of each eigenvalue  $\lambda_i$ of $L(p)$ (i.e., the number of Jordan $\lambda_i$-blocks in the canonical Jordan decomposition over $\mathbb C$) equals one. }\end{Definition}

One more notion is related to analytic properties of the coefficients of the characteristic polynomial $\chi_{L(x)}(t) = \det \bigl(t\cdot\Id - L(x)\bigr)= \sum_{k=0}^n t^k \sigma_{n-k}(x)$ of $L$.

\begin{Definition}
\label{def:nondeg}
{\rm
A point $p\in M$ is called  {\it differentially non-degenerate}, if the differentials $\ddd\sigma_1(x),\dots,\ddd\sigma_n(x)$ of the coefficients of the characteristic polynomial $\chi_{L(x)}(t)$  are linearly independent at this point.  
}\end{Definition}

The notions of algebraic genericity, algebraic regularity and differential non-degeneracy are related  to each other.  For example, differential non-degeneracy automatically implies $\gl$-regularity \cite{nij1}.  On the other hand, 
if $L(p)$ is $\mathrm{gl}$-regular and, in addition,  diagonalisable, then $p$ is algebraically generic (i.e., non-singular).   Conversely, if we assume that  $L(p)$ is not $\mathrm{gl}$-regular, we may expect that $p$ is singular in the sense of Definition \ref{def:singpoint}.  This will definitely be the case if $L$ is real analytic and there are points $x\in M$ at which $L(x)$ is $\mathrm{gl}$-regular.   

Notice that in the context of Definition \ref{def:algregular},  the `most non-regular'  operators are those of the form $L(p) = \lambda\cdot \Id$, $\lambda\in \R$, as in this case $\dim \mathcal O(L) = 0$, i.e., $L$ is a fixed point w.r.t. the adjoint action.   It is known that for any function  $\lambda$ the $(1,1)$-tensor field  $\lambda \cdot \Id $ is a   Nijenhuis operator. Those points where $L(p)$ becomes scalar play a special role in Nijenhuis geometry  (see \cite{nij2}) and we distinguish this type of points by introducing

\begin{Definition}
\label{def:scalar}
{\rm
A point $p\in M$ is called {\it of scalar type}, if $L(p) = \lambda\cdot \Id$, $\lambda\in \R$.
}\end{Definition}  

In the real analytic case,  scalar type points are automatically singular unless  $L$ is scalar on the whole manifold $M$, i.e.,  $L(x) = \lambda(x) \cdot \Id$  for a certain global function $\lambda : M \to \R$. 

\subsection{Useful properties of a Nijenhuis operator}

In this section, $L$ denotes a Nijenhuis operator.

\begin{itemize}
    
\item  For any vector field $\xi$ we have
$$
\mathcal{ L}_{L\xi}  (\det L)  =
\det L \cdot \mathcal{ L}_\xi  \tr L
$$
or, equivalently,    
\begin{equation}
\label{eq:lndetL}
L^* \ddd  (\det L)=\det L \cdot \ddd \tr L.
\end{equation}
More generally,  the differential of the characteristic polynomial $\chi(t)=\det (t\cdot \Id-L)$ (viewed as a smooth function on $M$ with $t$ as a formal parameter) satisfies the following relation:
\begin{equation}
\label{eq10}
L^* \bigl(\ddd \chi(t) \bigr) - t \cdot \ddd \chi(t)=\chi(t) \cdot \ddd\tr L.
\end{equation}

\item Let $\sigma_1, \dots, \sigma_n$ be the coefficients of the characteristic polynomial 
$$
\chi (t)= \det (t\cdot \Id - L) = t^n + \sigma_1 \, t^{n-1} + \sigma_2 \, t^{n-2} + \dots + \sigma_{n-1} \, t + \sigma_n 
$$
of a Nijenhuis operator $L$.   Then in any local coordinate system $x_1,\dots, x_n$ the following matrix relation holds:
\begin{equation}
\label{eq:Lexplicit}
J(x) \, L (x)= S_\chi (x)  \, J(x),  \quad\mbox{where }   S_\chi (x) = \begin{pmatrix}
-\sigma_1(x)  &  1  &    &    & \\ 
-\sigma_2(x) & 0 & 1&  & \\  
\vdots   &  \vdots &\ddots &\ddots &  \\
-\sigma_{n-1}(x)  & 0 &\dots & 0 &1 \\
-\sigma_n(x) & 0 & \dots & 0 & 0
\end{pmatrix}
\end{equation}
and $J(x)$ is the Jacobi matrix of the collection of functions $\sigma_1,\dots, \sigma_n$ w.r.t. the variables  $x_1,\dots, x_n$.

\item  If $L$ is differentially non-degenerate then there exists a local coordinate system $x_1,\dots,x_n$ such that
$$
L = \begin{pmatrix}
-x_1  &  1  &    &    & \\ 
-x_2 & 0 & 1&  & \\  
\vdots   &  \vdots &\ddots &\ddots &  \\
-x_{n-1}  & 0 &\dots & 0 &1 \\
-x_n & 0 & \dots & 0 & 0
\end{pmatrix}.
$$
We recall that the characteristic polynomial of such $L$ is 
$$
t^n + x_1 t^{n-1}+ x_2 t^{n-2}+\cdots + x_n,
$$
so the functions $\sigma_i$ from \eqref{eq:Lexplicit} are just the coordinates.

\item if we have two Nijenhuis operators $L_1$ and $L_2$ whose characteristic polynomials on $M$ coincide and  the coefficients of these polynomials  are functionally independent almost everywhere on $M$, then $L_1=L_2$.    

\item (Haantjes Theorem, see e.g. \cite{Haantjes}). If $L$ is diagonalisable over $\mathbb R$ with distinct eigenvalues then there is a local coordinate system $y_1,\dots, y_n$ such that 
$$
L=\operatorname{diag}(f_1(y_1), f_2(y_2), \dots, f_n(y_n).
$$
\item Let $L$  be a Nienhuis operator. Then $f(L)$ is a Nijenhuis operator for any  real analytic matrix function, see e.g. \cite{splitting1} for a discussion of functions of
operators.

\item Splitting Theorem \cite{splitting1, nij1}.  
Consider a point $p\in M$ and let $\lambda_1, \dots, \lambda_n$ be the eigenvalues (possibly complex) of $L(p)$ counted with multiplicities.   Assume that they are divided into two groups $\{\lambda_1,\dots,\lambda_r\}$ and $\{\lambda_{r+1},\dots,\lambda_n\}$ in such a way that equal eigenvalues as well as complex conjugate pairs belong to the same group, i.e., for every $i\in \{1,\dots,r\}$, $j\in \{r+1,\dots,n\}$ we have $\lambda_i\not=\lambda_j$ and $\lambda_i\not=\bar\lambda_j$.  
Then the characteristic polynomial of $L$ can be factorised
$\chi_L(t)=\chi_1(t)\, \chi_2(t)$ into two real polynomials with no common roots. Moreover, this can be done in some neighborhood of $p\in M$. 

The Splitting Theorem states that there exists a coordinate system $(x_1,\ldots, x_r,y_{r+1}, \ldots, y_{n})$ adapted to this factorisation in which the operator $L$ takes the block-diagonal form
 \begin{equation}
 \label{matl} 
L(x,y)=\begin{pmatrix} L_1(x) & 0 \\  0 & L_2(y)\end{pmatrix},
\end{equation}
where the entries of the $r\times r$ matrix $L_1$ depend on the $x$-variables only, and the entries of the $(n-r)\times (n-r)$ matrix $L_2$ depend on the $y$-variables only.     In other words,
$L$ splits into a direct sum of two Nijenhuis operators: $L(x,y) = L_1(x) \oplus L_2(y)$, with $\chi_i(t)$ being the characteristic polynomial of $L_i$.  
\end{itemize}

Papers \cite{nij2}, \cite{nij3}, \cite{nij4} are devoted respectively to
\begin{itemize}
    \item {\it Linearisation of Nijenhuis operators at singular points and left symmetric algebras}. Though the study  of singular points is an important direction  in Nijenhuis geometry, as well as in the research program of our workshops, the   relation to left symmetric algebras is not explicitly mentioned in the subprojects below. See \cite{KonyaevII,Serena} for recent results on this topic. 
    
    \item  {\it $\gl$-regular Nijenhuis operators}. The paper \cite{nij3} is an important step for the development of Nijenhuis geometry program, particularly in view  of possible applications.  The results of this paper were developed and applied in   \cite{nijappl5,nij4}, which focus more on the relation between $\gl$-regular Nijenhuis operators,  integrable systems and some questions in differential geometry. 
    
    \item {\it Symmetries and conservation laws of Nijenhuis operators}. This paper is very important for the programme, in particular for the F- and Frobenius structures, see \S \ref{sec:3} and also for  the study of BKM systems, see \S \ref{sec:1}. 
\end{itemize}

The two last topics are closely related to the problems discussed below. 
Necessary  definitions and facts are presented in the relevant sections.  

Scalar type points and $\gl$-regular points are, informally speaking,  boundary types of singular points. The study of  intermideate types of singularities, 
when the singular point is neither $\gl$-regular nor of scalar type, is an interesting topic, see e.g. \cite{Akpan1,Akpan2, Akpan3}. 

An obligatory  Nijenhuis geometry reading  list  definitely includes \cite{nij1}. Those interested in F- and Frobenius structures should also study  \cite{nij4}. BKM systems are introduced in \cite{nijapp4}, and further developed in \cite{BKMreduction}. For geometric Poisson brackets,  \cite{nijapp3} may be necessary. Separation of variables was handled in \cite{separation}.

\subsection{What is a  Benenti system?} \label{sec:benenti}

The terminology was suggested in \cite{BM2003,IMM}; it  is slightly misleading as S. Benenti studied a  special case only. 
We start with a pair $(\textrm{metric $g$}, \ \textrm{$g$-selfadjoint (1,1)-tensor $L$})$ satisfying the following condition:\footnote{Equivalent form of this condition is \eqref{eq:B1}.}. 
\begin{equation} \label{eq:M0}
    L_{ij, k} = \lambda_{j} g_{ij}+ \lambda_{i} g_{jk}.
\end{equation}
In the above equation, we use $g$ for manipulation with indices and  denote by comma the covariant derivative with respect to the Levi-Civita connection of  $g$. The 1-form $\lambda = (\lambda_i)$ staying on the right hand side is necessarily $\lambda = \tfrac{1}{2}\, \ddd\trace(L)$.

The equation \eqref{eq:M0} appeared many times independently in different branches of mathematics. In particular, in the context of integrable systems, the solutions $L$  of  \eqref{eq:M0}  (for a given $g$) are  called {\it  special conformal Killing tensors} in \cite{Crampin}. In differential geometry, \eqref{eq:M0}  appeared at least in \cite{Sinjukov} in the context of geodesically equivalent metrics. We will give more details in \S \ref{sec:conservation}; let us note that the equation \eqref{eq:B1} from that section is equivalent to \eqref{eq:M0}.

Given $L^i_j$ satisfying \eqref{eq:M0}, one can construct a family of commutative integrals for the geodesic flow of $g$. Namely, consider the following family of functions on the tangent bundle $TM$ polynomially depending on $t$ as a parameter:
\begin{equation} \label{eq:MI}
   \xi\in TM \ \mapsto \  I_t(\xi)= \det(t\,\Id{-} L) g((t\,\Id {-}L)^{-1} \xi, \xi)= I_{0}(\xi)t^{n-1} {+} I_{1}(\xi)t^{n-2}{+} \cdots {+} I_{n-1}(\xi).  
\end{equation}
 
\begin{Theorem}[\cite{MT}] \label{thm:MT}
The functions $I_i$ commute with respect to  the pullback of the canonical Poisson structure on $T^*M$ with respect to the  standard  identification of $TM$ and $T^*M$ by $g$. 
\end{Theorem}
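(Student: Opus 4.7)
I would prove the stronger statement $\{I_t, I_s\} = 0$ identically in the parameters $t,s$; expanding in powers of $t$ and $s$ then yields pairwise commutativity of all the coefficients $I_k$.

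Write $I_t$ in canonical coordinates on $T^*M$. Under the identification $TM \cong T^*M$ via $g$, with $\xi^i = g^{ij}p_j$, the generating function becomes
$$I_t(x,p) = T(t)^{ij}(x)\, p_i\, p_j, \qquad T(t) := \det(t\Id - L)\cdot (t\Id - L)^{-1}g^{-1},$$
which, by the adjugate formula, is a matrix polynomial in $t$ of degree $n-1$. The $g$-selfadjointness of $L$ forces $T(t)^{ij}$ to be symmetric in $(i,j)$, so each coefficient $I_k$ is a quadratic-in-momenta Hamiltonian associated with a symmetric $(2,0)$-tensor.

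For two such Hamiltonians, the canonical Poisson bracket equals the Schouten bracket of the symbols, evaluated on $p\otimes p\otimes p$, and may be computed using any torsion-free connection, in particular the Levi--Civita connection of $g$. Applied to $I_t$ and $I_s$, this gives, up to symmetrization in $(i,j,k)$,
$$\{I_t, I_s\} = 2\bigl(T(s)^{mk}\nabla_m T(t)^{ij} - T(t)^{mk}\nabla_m T(s)^{ij}\bigr)\,p_i p_j p_k.$$
Vanishing of this cubic polynomial in $p$ is a purely tensorial identity, to be established pointwise.

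To verify the identity, I would use \eqref{eq:M0} to rewrite $\nabla L$ as $g\otimes\lambda + \lambda\otimes g$ with $\lambda = \tfrac12\,\ddd\tr L$, which in turn yields a concrete formula for $\nabla T(t)$ in terms of $A(t)^{-1}$, $g$ and $\lambda$, where $A(t):=t\Id - L$. The key algebraic tool is then the resolvent identity
$$A(t)^{-1} - A(s)^{-1} = (s-t)\,A(t)^{-1}A(s)^{-1},$$
which lets one combine the two rational expressions in $(t,s)$ arising from the two pieces of $\{I_t,I_s\}$. After extracting the scalar factor $\det A(t)\,\det A(s)$, every surviving summand is a product of $A(t)^{-1}$, $A(s)^{-1}$, $g^{-1}$ and $\lambda$, and the full symmetrization over $(i,j,k)$ annihilates the result.

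The main obstacle I expect is the bookkeeping in the three-way symmetrization combined with the rational dependence on $t$ and $s$. A conceptually cleaner alternative is to exhibit a second Poisson structure on $T^*M$ built from $g$ and $L$, compatible with the canonical one; compatibility is essentially equivalent to \eqref{eq:M0} together with the Nijenhuis property of $L$ (itself forced by \eqref{eq:M0}), and commutativity of the $I_k$ then follows from the Magri--Lenard recursion applied to this bi-Hamiltonian pair.
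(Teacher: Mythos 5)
The paper itself offers no proof of Theorem~\ref{thm:MT}: it is quoted from \cite{MT}, so there is nothing internal to compare against. Your plan is nonetheless correct, and it is one of the two classical routes to this statement. Moreover, the bookkeeping you are worried about collapses more cleanly than you fear. Writing $A=A(t)=t\,\Id-L$, $B=A(s)$, $a=\det A$, $b=\det B$, and using $\nabla_m L^i_j=\delta^i_m\lambda_j+\lambda^i g_{jm}$ (the operator form of \eqref{eq:M0}/\eqref{eq:B1}) together with $\nabla_m\det A=-2a\,\lambda_p(A^{-1})^p_m$, one finds
\begin{equation*}
T(s)^{mk}\nabla_m T(t)^{ij}-T(t)^{mk}\nabla_m T(s)^{ij}
= ab\,(s-t)\bigl(-2\rho^k C^{ij}+\rho^j C^{ik}+\rho^i C^{jk}\bigr),
\end{equation*}
where $C^{ij}=(A^{-1}B^{-1}g^{-1})^{ij}$ and $\rho=A^{-1}B^{-1}\lambda^{\sharp}$, the factor $(s-t)$ coming exactly from the resolvent identity you cite; the symmetrisation over $(i,j,k)$ kills the right-hand side since the three terms become identical. (Self-adjointness of $L$, hence of $A^{-1}B^{-1}$, is used throughout to move indices past $g$.) For context: the original argument of \cite{MT} is genuinely different --- it produces the $I_t$ from the geodesic equivalence of $g$ and $\bar g=\tfrac{1}{|\det L|}\,g L^{-1}$ (the point of view recalled in \S\ref{sec:conservation}) and deduces involutivity from the interplay of the two geodesic flows, whereas your direct Schouten-bracket computation is the Benenti/Crampin--Sarlet--Thompson route \cite{Crampin}, and your bi-Hamiltonian alternative is essentially \cite{IMM}. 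The only soft spot is that last alternative: ``compatibility is essentially equivalent to \eqref{eq:M0}'' is itself a nontrivial claim, and the Lenard--Magri argument needs the recursion to start and terminate correctly, so the direct computation above is the safer way to a complete proof.
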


In other words, for any $t$, the (0,2)-tensor given by \eqref{eq:MI} is a Killing tensor. Note that the integral $I_0$ is twice the Hamiltonian $g(\xi, \xi)$ of the geodesic flow.

One can show that the number of  functionally independent integrals $I_i$ is the degree of the minimal polynomial of $L$ at a generic point, see \cite[Lemma 5.6 and Corollary 5.7]{Gover} or \cite[Theorem 2 and Proposition 3]{Topalov}. 

The equation \eqref{eq:B1} is covariant, so one can study it in any coordinate system. The diagonal coordinates for $L$ correspond to separation of variables, they are discussed in \S \ref{sec:2.4.1}, see formula \eqref{eq:M2} in which the kinetic part corresponds to the integrals from Theorem \ref{thm:MT}.  As discussed in  \S \ref{sec:2.4.1}, the first companion coordinates may  be useful for construction of certain solutions of BKM systems, while the
second companion coordinates will be used in topics from \S \ref{sec:conservation}, in the interplay of conservation laws and geodesically compatible metrics.

\section{Study of BKM systems}  \label{sec:1}

\subsection{What are BKM systems?}\label{sec:1.1}
A large family of integrable PDE systems was constructed in \cite{nijapp4}.
There are 4 types of systems, I--IV.  It is easier to explain the  systems BKM II and BKM IV, so we start with them.  
These are systems of the form 
\begin{equation} \label{eq:M1}
\tfrac{\partial u}{\partial t}= \xi[u], 
\end{equation}
where $u=(u^1(x,t),...,u^n(x,t))$ are unknown functions of two variables and the components of   $\xi[u]=(\xi^1[u],...,\xi^n[u])$ are    certain  polynomials   in $u_x= \tfrac{\partial u}{\partial x}$, $u_{xx}=\tfrac{\partial^2 u}{\partial x^2}$ , $u_{xxx}=\tfrac{\partial^3 u}{\partial x^3}$ with coefficients depending on $u$.

The systems of this type are called {\it dynamical or evolutionary (PDE) systems}.  The form of \eqref{eq:M1} is visually similar to the standard way of writing a 
 usual finite-dimensional dynamical ODE system on an 
unknown vector-function
$u=(u^1(t),...,u^n(t))$: 
\begin{equation} \label{eq:ana} \dot u(t)= X(u(t))\ , \ \ \textrm{ where    $X(u)$ is a vector field}.\end{equation} 
Let us discuss further analogies between \eqref{eq:M1} and \eqref{eq:ana} which may be useful to have in mind.

The initial condition for \eqref{eq:M1} is a curve $x\mapsto u(x, 0)$; one can treat \eqref{eq:M1} as a dynamical system on the space    of curves\footnote{ In the real-analytic category, jet of a curve at one point determines the curve. So the space of curves is essentially the jet bundle.}.  
The tuple of differential polynomials  $\xi$ can be viewed as  an infinite-dimensional vector field. If all the objects are real-analytic, the Cauchy-Kovalevskaya Theorem  guarantees the existence  and uniqueness of local solutions  with a given initial condition  $u(x, 0). $ Examples show that some solutions ``explode'', i.e.,  go to infinity in a finite time.

A BKM system  depends on the choice of the following parameters: 

\begin{enumerate}
\item  dimension $n$;
\item  a differentially nondegenerate\footnote{An operator is differentially nondegenerate, if the coefficients of its characteristic polynomial are functionally independent, see Definition \ref{def:nondeg}. Locally isomorphic 
examples of differentially nondegenerate operators are given in \eqref{dnd}.} Nijnehuis operator $L$ in dimension $n$;
\item a polynomial  $m(t)$ of degree at most $n$ in BKM II and of degree at most $n-1$ in BKM IV; 
\item  a number $\lambda\in\mathbb R$ (or $\mathbb C$) in BKM I and  III  and  
a root of $m(t)$ in BKM II.  
\end{enumerate}

\begin{Remark} {\rm The systems constructed in \cite[\S 2]{nijapp4} are  slightly more general, in particular one can in addition choose  an integer number $N\ge  0$ and natural numbers  $n_0, n_1, \dots  , n_N$ and $\ell_1,\dots , \ell_N$ satisfying certain conditions.  In the present paper, we restrict    the consideration to, and will speak only about  the systems such that $N=0$. In this case, the conditions on $n_i$ and $\ell_i$ imply  $n_0=n$. }
\end{Remark}

Each such choice gives, by explicit formulas explained in \cite[\S 2]{nijapp4}, 
an integrable  system of the form \eqref{eq:M1}.

The BKM systems of type I and III  can also be written in the form \eqref{eq:M1}, with the following difference: the 
``vector field'' $\xi$ is now a differential series  in $u_x, u_{xx}=u_{x^2}, u_{xxx}=u_{x^3}, u_{xxxx}=u_{x^4},\dots$  The series has a certain special form which allows one to rewrite it as a system of PDEs, that is, 
in a  form such that only finite (in fact, at most third) order derivatives are used. More precisely, one introduces  an additional auxiliary  unknown function  $q(x,t)$ and  
rewrites the initial system  as a system of third order PDEs on the unknown functions $u,q$; the higher derivatives of  $u$ are then hidden in the PDEs involving  $q$ which is called  a {\it differential constraint}.  See \cite[\S 2]{nijapp4} for details.

We suggest to view the BKM systems  as follows:   we work on a manifold with a local coordinate system $u=(u^1,\dots,u^n)$.  The background structure on the manifold is a differentially nondegenerate Nijenhuis operator. 
For any choice of parameters specified above, we obtain a dynamical  system on the space of curves.

BKM systems are integrable in many senses\footnote{The notion of integrability of infinite dimensional systems is not completely formalised in literature, see e.g. \cite{Deift2019}.}. 
In particular, each BKM system  has  an infinite hierarchy of conservation laws  (finite-dimensional analog of a  conservation  law is an integral of a dynamical system), and an  infinite hierarchy of symmetries  (a finite-dimensional analog of a symmetry for $\dot u= X(u)$ is a vector field commuting with $X$).

Though it is possibly not clear from the proofs of \cite{nijapp4} and only tangentially mentioned there, the BKM systems are  multi-Hamiltonian. The corresponding pencils of compatible Poisson structures were 
constructed in \cite{nijapp3}.  Note that these pencils are related to separation of variables for metrics of constant curvature, as discussed in \cite{nijapp3,separation}. Namely,   for any  separating coordinate  system for a metric of constant curvature there exists a family of compatible Poisson structures, and vice versa.

 Depending on the choice of parameters and a suitable coordinate system\footnote{Differentially nondegenerate Nijenhuis operator is essentially unique, but choosing different coordinate systems gives  us visually different, though  equivalent via a local diffeomorphism $u_{\mathrm{new}}=u_{\mathrm{new}}(u_{\mathrm{old}})$,  systems.}, special cases of BKM systems give us a series of interesting examples. In  particular, 
KdV,  Harry Dym,  Camassa-Holm, 
Dullin-Gottwald-Holm are BKM systems with $n=1$ and  Kaup-Boussinesq systems,  coupled
Harry Dym, coupled KdV and multi-component Camassa-Holm  are BKM systems with $n\ge 2$. The form of the Nijenhuis operator and the choice of parameters for these systems  are specified in  \cite[\S 3.3]{nijapp4}.

\subsection{Research direction 1: Looking for other integrable systems which are   BKM systems }

As discussed in \S  \ref{sec:1.1}, many integrable systems are BKM systems. There is almost an industry in mathematical physics of searching for new integrable systems.   Many  integrable systems appeared in studying problems of applied interest and  come directly from physical considerations.  So now many integrable systems can be found in literature.

The goal of this research direction is to understand  which  of them are BKM systems.
The point is  that we plan to adjust 
methods  developed for specific  integrable systems to the whole class of BKM system; writing an integrable system as a BKM system will allow us to use these methods for more systems. An additional motivation is that this will allow us to identify systems which coincide up to a local diffeomorphism of the unknown functions $u=(u^1,\dots, u^n)$, since in each dimension $n$, the differentially nondegenerate operators are essentially locally isomorphic but the corresponding BKM systems may be visually very different.

That is, given an $n$-component integrable  system of PDEs of the form \eqref{eq:M1} (or with differential constraints as BKM I and III) found in a literature, can  one find a differentially nondegenerate $n$-dimensional operator $L$  and parameters (1--4) from \S \ref{sec:1.1} such that the system ``transforms'' into a BKM system?

The following  three  test questions may help to understand whether an integrable system of the form \eqref{eq:M1} has a chance to be a BKM system: 

\begin{itemize}
    \item[TQ1] Is the system bi-Hamiltonian?  What is the second Poisson  structure?  (In the BKM systems,   compatible Poisson structures\footnote{We recall  the definition of geometric Poisson structures of order $k$ in  
    \S \ref{sec:6.1}.}  are of third order).  
    \item[TQ2] What do the conservation laws and symmetries look like?  (In the BKM-systems, the orders of the conservation laws are even, and of symmetries are odd)
    \item[TQ3] Do stationary\footnote{Stationary solutions are characterised by the property that all but finitely many symmetries  vanish on the system. For KdV, they are sometimes called finite-zone solutions \cite{VBM}.} solutions of the system correspond, in a natural sense, to a weakly nonlinear (=  linearly degenerate) system of a hydrodynamic type? 
\end{itemize}

Of course, most interesting systems are those which describe physical phenomena. Generally, finding physically interesting systems in the class of BKM systems is a very important task, and mathematicians may need a little help from physicists on this topic.

 A possible scenario of this research could be: 
 \begin{itemize}
\item  one finds  an integrable system  e.g. in the literature; 
\item  then, one needs  to apply the test questions TQ1, TQ2, TQ3  above, and if the answers are positive, look further\footnote{One needs  to look for a coordinate system on our $n$ dimensional manifold and for parameters (1--4) described in \S \ref{sec:1.1} such that  the corresponding BKM system  becomes the system under consideration.} at  the system.  \end{itemize}

\subsection{Research direction 2: Applying established methods from the theory of integrable systems to BKM systems  }

Theory of integrable systems is an important chapter in mathematical physics and  mathematics. Such systems were studied from different perspectives; there   is  a bunch of nontrivial methods developed in the area and coming from different branches of mathematics.  

Which  of these methods   can be applied   to all or to some of BKM systems?  How to translate such a  method to the language of Nijenhuis geometry? Can one universally apply the method to all BKM systems simultaneously?

 A possible scenario of this research could be: 
 \begin{itemize}
\item  one takes  a  method which was successfully used in the  study of a certain special case  of BKM systems (for example, 
KdV) and studies both the outcome of the method and preliminary steps;  
\item  then, one needs to understand    which auxiliary objects should be constructed to make this method work; 
\item and finally one tries to construct these objects  using tools of  Nijenhuis geometry and then  apply them to all BKM systems.
   \end{itemize}

As recalled above, many integrable systems studied by classical methods, in  particular, 
KdV, coupled KdV, Harry Dym, coupled
Harry Dym, Camassa-Holm, multi-component Camassa-Holm, 
Dullin-Gottwald-Holm and Kaup-Boussinesq systems are BKM systems.

Possible  auxiliary objects  are 
``recursion operator'', ``$R$-matrix", ``Lax representation", ``Inverse Scattering Transform'', ``zero curvature represenatation''. Of course, one of the most successful methods of studying infinite-dimensional systems is related to their ``finite-gap'' or ``stationary'' finite-dimensional reductions; this method was recently generalised for all BKM systems in \cite{BKMreduction} and in the next subsection  we discuss its applications and further  developments.

\subsection{Finite-gap type  solutions of BKM systems and separation of variables}

\subsubsection{Background} \label{sec:2.4.1}
 
Finite-gap type solutions  form  a special class of solutions for certain infinite-dimensional integrable systems, see e.g. \cite{VBM}. The general scheme is as follows: 
under an additional assumption on the solution, the integrable system is reduced to a finite-dimensional integrable system. This finite dimensional integrable system is then solved  (e.g., by means of advanced algebro-geometric methods, or by reducing it  to a system of ODEs and using numerical solvers) and its solutions  
give us solutions of the initial PDE system. This is practically the only method that allows one to find exact solutions. This method can also be effectively used to  construct approximate solutions with a high degree of accuracy.
Solitons and finite-zone solutions of KdV, see \cite{VBM},  can be constructed in this way. 

Recently, this scheme for BKM systems has been developed and understood \cite{BKMreduction}. It appears that  the corresponding finite-dimensional system  is  related to separation of variables in the flat  space of dimension $N$ (``number of gaps''), and to a certain Benenti  system on this space.

We start with a BKM system (with $n$ components and  generated by a polynomial $m(t)$, see \S \ref{sec:1.1}).   Recall that our freedom is  the choice of a polynomial $m(t)$ of degree  $\le n$. 
The  construction works with some  modification with BKM systems of all four types and for any choice of the number $\lambda$. The necessary    modification only affects the formula that transforms solutions of the  finite-dimensional system to  those of \eqref{eq:M1}.

Next, take $N$ (``number of gaps'') and  consider the following  integrable systems on $2N$-dimensional manifold  $T^*\R^N$ with the standard symplectic structure.  
The commuting functions  $I_0,\dots,I_{N-1}$   are sums of terms quadratic in momenta   (=kinetic terms) and functions on the manifold (=potentials) which are given by  explicit formulas that we describe now:

We consider the Vandermonde $N\times N$  matrix 
$$
W(x_1,\dots ,x_N)= \begin{pmatrix} x_1^{N-1} &  x_1^{N-2} & \cdots &1 \\ 
x_2^{N-1} &  x_2^{N-2} & \cdots &1 \\ 
\vdots  &    & \vdots & \vdots  \\ 
x_N^{N-1} &  x_N^{N-2} & \cdots &1   \end{pmatrix}. 
$$
Next, in the coordinates $(x,p)$ on the cotangent bundle $T^*\R^N$, we consider the functions $I_0(x, p),...,I_{N-1}(x,p)$ given by 
\begin{equation} \label{eq:M2}
  \begin{pmatrix} x_1^{N-1} &  x_1^{N-2} & \cdots &1 \\ 
x_2^{N-1} &  x_2^{N-2} & \cdots &1 \\ 
\vdots  &    & \cdots & \vdots  \\ 
x_N^{N-1} &  x_N^{N-2} & \cdots &1   \end{pmatrix} \begin{pmatrix}  I_0 \\ 
I_1 \\ 
\vdots   \\ 
I_{N-1}  \end{pmatrix}   =  \begin{pmatrix}  \tfrac{1}{2}p_1^2 + V(x_1) \\ 
 \tfrac{1}{2}p_2^2 + V(x_2) \\ 
\vdots   \\ 
\tfrac{1}{2} p_N^2 + V(x_N)  \end{pmatrix}  . 
\end{equation}
Notice that \eqref{eq:M2} is a system of linear equations on $I_0,...,I_{N-1}$; if we multiply it by the  matrix  $W^{-1}$,  we obtain an  explicit formula for $I_0,...,I_{N-1}$. 
We clearly see that  the integrals $I_0,...,I_{N-1}$ are sums of the kinetic and potential terms.

The  function $V(t)$ of one variable in \eqref{eq:M2}  is given by the formula  
\begin{equation}\label{eq:M3}
V(t)= C(t)/m(t), 
\end{equation}
where $m(t)$ is the polynomial (of degree at most $n$) chosen in  the construction of the BKM system, and  $C$ is a polynomial of degree $2N+n$. The coefficients of $C(t)$ satisfy some minor\footnote{see Example \ref{ex:M1}.} assumptions and can be chosen free otherwise. For each choice of the polynomial  $C$,  we obtain a finite dimensional  integrable system each solution of which can be transformed to a solution of 
the initial BKM system.

\begin{Ex} \label{ex:M1}{\rm
In the KdV, and generally in any
BKM IV system,  one may assume that the   polynomial $C(t)$ has the form  $C(t)=1 + c_2t^2+...+c_{2N+n}t^{2N+n}$.      In the  KdV system,  the polynomial $m(t)$ is  $m(t)= m_0 = \textrm{const}$,  as the unknown vector function $u$  has $n=1$ components, and in a BKM IV system with   $n$ components  $m$  is an arbitrary polynomial of degree $<n$.}
\end{Ex}

\begin{Remark}{\rm 
The metric corresponding to the quadratic in momenta part of the  integral $I_0$ 
  has the form 
  \begin{equation} \label{eq:M4}
      g= \sum_{i=1}^N \left( \prod_{j\ne i} (x_i - x_j) \right) dx_i^2. 
  \end{equation}  
  Note that the  metric is flat and has   splitted 
  signature. It is degenerate at the points where $x_i=x_j$ for some $i\ne j$.}  
\end{Remark}

It has been observed many times  that certain famous integrable systems are related to separation of variables.  In all the cases we found in the literature, this relationship was considered as magic (a wonderful observation with no explanation). 
A historical review on the development of finite-gap solutions in the theory of  integrable systems is  in \cite{VBM}. It seems that the relation between systems generated by Poisson-commuting function 
of the form \eqref{eq:M3} and finite-gap solutions of KdV  was first  observed at the level of exact  solutions, see discussion in  \cite{DKN,moser80,moser81, veselov}.

A recent approach to this problem  which  is definitely related to ours is presented in \cite{BM2006, BM2008, BS2023,MB2010}. The authors of these papers went in other direction:  the primary object of their investigation is   a finite-dimensional  system of the form \eqref{eq:M2} from which they  come to  integrable PDE-systems which are special cases of BKM systems.

\vspace{1ex}

In \cite[Theorems 2.1, 2.2]{BKMreduction}, certain 
 solutions of BKM systems are linked with the system generated by the commutative integrals $(I_0,...,I_{N-1})$.  Namely, for each BKM system 
 there exist  two linear combinations $F= \lambda_0 I_0 + \cdots + \lambda_{N-1}I_{N-1}$ and  $H= \mu_0 I_0 + \cdots  + \mu_{N-1}I_{N-1}$ (the coefficients $\lambda_i$ and $\mu_i$ depend on the type I--IV 
 of the BKM system and on the choice of zero of the polynomial $m(t)$)
such that every solution $(x_1(\tau,t), \dots, x_N(\tau,t))$ of the system 
\begin{equation}\label{eq:M5}
   \tfrac{ \partial }{\partial \tau }(x,p) = \mathcal X_H  \  \   \  \tfrac{ \partial }{\partial t}(x,p) =\mathcal X_F \ , \ \textrm{where $\mathcal X_F$ is the  Hamiltonian vector field of $ F$,}
\end{equation}
``produces'' a solution $u(t,\tau) = \mathcal R (x(\tau, t))$ of \eqref{eq:M2}\footnote{In this $u(t,\tau)$ we replace $\tau$ by $x$.}  by means of  an explicitly given\footnote{The mapping $\mathcal R$, for big $n$ and $N$,   is given by quite complicated formulas which can be obtain algorithmically, e.g., using Maple.} mapping $\mathcal R: \R^N(x_1,...,x_{N}) \to \R^n(u_1,...,u_n)$.

\vspace{1ex}

Note that   a more natural coordinate system  for the  system \eqref{eq:M2} is  not the ``diagonal''  $x$-coordinates
(in these coordinates the kinetic energy of each integral $I_k$ is given by a diagonal matrix; in these coordinates 
the separation of variable technique is applied straightforwardly), but the 
coordinate system $w_1,...w_N$  related to the diagonal coordinates  $x_1,...,x_N$ by the formula
\begin{equation}\label{eq:M6}
  (t- x_1)(t-x_2)\cdots (t-x_N) =  t^{N}+ w_1t^{N-1}+\cdots +w_{N-1} t + w_N
\end{equation}
That is, the functions $w_i$ are elementary symmetric polynomials of $x_i$ with appropriate signs.

The  coordinate system $w$ has the following advantage over the coordinate system $x$: the formulas $u(w)$ are defined also  at the points where $x_i = x_j$ for some $i,j\in 1,...,N$. The points are indeed important; for example the multi-soliton solution of  KdV corresponds to solutions $x(\tau, t)$ of \eqref{eq:M5} passing through such points.

\begin{Remark} \label{rem:M1} {\rm
   Nijenhuis geometry stays  behind the  transformation formula from $x$ to $w$.  Actually, the quadratic part of the 
   system generated by the commuting integrals  comes from  an $N$-dimensional  differentially nondegenerate Nijenhuis operator,  and is related to the corresponding ``Benenti'' system. 
In the coordinates $x$,  the Nijenhuis operator  has the diagonal form $M_{\mathsf{diag}}(x)$  below,  and in the coordinates $w$ the Nijenhuis operator has the  first companion form  $M_{\mathsf{comp1}}(w)$. The coordinate 
transformation $w(x)$ transforms  $M_{\mathsf{diag}}(x)$  to $M_{\mathsf{comp1}}(w)$.

   \begin{equation}\label{dnd}
  M_{\mathsf{comp1}} = \left( \begin{array}{ccccc}
     -w_1 & 1 & 0 & \dots & 0  \\
     -w_2 & 0 & 1 & \dots & 0  \\
     \vdots & & & \dots & \\
     -w_{N - 1} & 0 & 0 & \dots & 1 \\
     -w_N & 0 & 0 & \dots & 0
\end{array}\right) 
 \  \   \    ,  \  \   \   \      M_{\mathsf{diag}} = \left( \begin{array}{ccccc}
     x_1 & 0  & 0   & \dots & 0  \\
     0 &  x_2 &  0 & \dots  & 0  \\
     \vdots & & & \vdots & \\
     0 & \dots  & 0 & x_{N-1} & 0 \\
     0 &  \dots  & \dots  & 0 & x_N \end{array}\right)  \end{equation}
}\end{Remark}

\subsubsection{ Research directions  on interrelation of certain  solutions of BKM systems and separation of variables }

The relation   between integrable systems  in dimension $N$ 
and stationary solutions of BKM systems is discussed in \ref{sec:2.4.1}.  We remind that  system \eqref{eq:M2}
 can be solved by separation of variables. Actually, formula \eqref{eq:M2} is adapted to separation of variables.

\begin{Problem} Obtain  soliton-type  solutions of  known special cases of BKM systems using this approach.\end{Problem}
As mentioned above, many known integrable systems, for example KdV, coupled KdV, Harry Dym, coupled
Harry Dym, Camassa-Holm, multi-component Camassa-Holm, 
Dullin-Gottwald-Holm and Kaup-Boussinesq,  are BKM systems. What are properties of the  solutions of these systems obtained by the method of \cite{BKMreduction}? Which of them can be called finite-gap or soliton solutions\footnote{Different authors view different properties as typical properties of finite-gap and soliton solutions.}? 

A particular system and a particular class of intersting  solutions is the class of peakon solutions  of the Camassa-Holm equation. These solutions have a conic-type singularity. Is the singularity 
related to ``jumps'' from one solution of  \eqref{eq:M5}  to another under the condition that these solutions belong to the same Liouville torus?

\begin{Problem} Find exact solutions of BKM systems. \end{Problem}
The system \eqref{eq:M5} can be integrated  in quadratures, that is, one can reduce it to solving a system of algebraic equations whose components are primitive  functions 
of explicitly given closed 1-forms. One can try to go further and look for exact  solutions of BKM systems using elliptic or other special functions.  See \cite[\S 1.4 and Example 1.5]{BKMreduction}.

\begin{Problem} Find a geometric realization of the integrable system  generated by  \eqref{eq:M2}.\end{Problem} The metrics which correspond to  the quadratic parts of integrals $I_0,...,I_{N-1}$   are somehow nice 
(for example, the metric \eqref{eq:M4} is flat, generally, metrics of constant curvature are expected to appear in all the cases). Take a  flat coordinate system, or a generalised flat coordinate system\footnote{Definition is in \cite[\S 1.4]{separation}.} if the metric has constant  nonzero curvature, and find the potential and the corresponding Nijenhuis operator $M$ discussed in  Remark \ref{rem:M1} in these coordinates. 

For the KdV system, this was reported to be done already in \cite{veselov}, though we did not find detailed formulas. The booklet \cite{moser81} relates  stationary solutions of the KdV to the solutions of the Neumann systems. Namely, it is shown that the potential energy of the Neumann system evaluated along the trajectory and suitably scaled is a solution of the KdV system.  Moreover, the solutions of the KdV equations obtained by this method are related to potentials in the Hills-Schr\"odinger equation whose spectrum has finitely many bands.   See also  \cite[p. 219, eqn (2.162), pp. 247, 248]{DKN}.

\begin{Problem} What is special in the  potential generated by $V(t)= C(t)/m(t)$?\end{Problem}
The potentials  that appear have a very special form, they are  constructed by  one function $V(t)$ which is the ratio of two polynomials.  The kinetic parts of the integrals depends on $N$ only and is the same for all systems. 
What nice properties do  the  systems  have?  
As mentioned above, in  \cite{moser80,moser81}, finite-gap solutions of the 
KdV equation were  related to  Hill's   equation with spectrum consisting of finitely many intervals. Can one generalise this relations to all BKM systems?

\section{Frobenius  and $\mathrm F$-manifolds} \label{sec:3}

\subsection{F-structures}  \label{sec:Fstructure}
The definition of a Frobenius manifold is complicated. We first introduce/recall   the notion of an F-structure.  Frobenius structure is a pair (F-structure, 1-form $\alpha$)  satisfying some compatibility condition as explained below in \S \ref{sec:Frobenius}.

We  start with a manifold $\mathsf M^n$ equipped with a triple: a vector field $e$, a vector field $E$ and a tensor of type $(1, 2)$ denoted by $a$. The tensor $a$ induces a natural bilinear operation on vector fields $\xi, \eta$ as
$$
\xi \circ \eta = a(\xi, \eta)
$$
The definition of an $\mathrm F$-manifold is given in terms of $e, E$ and the  operation $\circ$: we say that $\left(\mathsf M^n, \circ, e, E\right)$ is an {\it $F$-manifold} if $\circ, e, E$ satisfy the following conditions:
\begin{enumerate}
    \item $\circ$ defines a structure of commutative associative algebra on vector fields,
    \item $e$ is the unity of this algebra $\circ$,
    \item For any vector fields $\xi, \eta, \zeta, \theta$ we have
    \begin{equation}\label{hertlingmanincond}
    \begin{aligned}
    0 & = [\xi \circ \eta, \zeta \circ \theta] - [\zeta, \xi \circ \eta] \circ \theta - \zeta \circ [\xi \circ \eta, \theta] - \xi \circ [\eta, \zeta \circ \theta] + \xi \circ [\eta, \zeta] \circ \theta + \\
    & + \xi \circ \zeta \circ [\eta, \theta] - \eta \circ [\xi, \zeta \circ \theta] + \eta \circ [\xi, \zeta] \circ \theta + \eta \circ \zeta \circ [\xi, \theta].
    \end{aligned}    
    \end{equation}
    Here the square brackets $\left[\cdot, \cdot \right]$ define the standard commutator of vector fields. 
    \item For any pair of vector fields $\xi, \eta$
    \begin{equation}\label{eulervectorfieldcond}
        [E, \xi \circ \eta] - [E, \xi]\circ \eta - \xi \circ [E, \eta] = \xi \circ \eta.
    \end{equation}
    The vector field $E$ is called an {\it Euler vector field}.
\end{enumerate}
Though it  is not obvious,    formula \eqref{hertlingmanincond} defines a tensor field of type $(1, 4)$. The formula itself is related to the so-called Ako-Yano bracket (see \cite{aky} and discussion in \cite{magri}), which plays an important role in the theory of quasilinear integrable systems. Condition \eqref{eulervectorfieldcond} can be rewritten as $\mathcal L_E a = a$, where $\mathcal L_E$ stands for the Lie derivative along a vector field $E$.

    F-structure  was introduced by C.  Hertling and Yu. Manin in \cite{hm, h1}, as a pre-structure for  the (Dubrovin)-Frobenius structure. 
    $\mathrm F$-manifolds found an application in many areas of mathematics and mathematical physics, see \cite{l, h2, dav1, lpr} for overview and further references.

The relation of F-structure to Nijenhuis geometry is established in \cite[Theorem 4.4]{arsie}, where it was shown that the operator  $L$ defined by the identity
$$
L \xi = E \circ \xi,
$$
is, in fact, a Nijenhuis operator. This result shows that  $F$-manifolds are Nijenhuis manifolds. The general direction of this line of research is to use the methods and results of Nijenhuis geometry, as well as its philosophy, to obtain results on $\mathrm F$- and Frobenius structures and further in branches of mathematics and physics where Frobenius structures can be applied.

We will mostly consider the so-called  {\it regular} $F$-manifolds, which are defined as follows: the Nijenhuis operator $L:=E\circ$ is $\gl$-regular, i.e., every eigenvalue of $L$ has geometric multiplicity~1. 

As noticed above, each $\mathrm F$-manifold carries a natural Nijenhuis structure. A natural question ``What structure, in addition to a $\gl$-regular Nijenhuis operator, is necessary in order to define a regular F-structure?''  was answered in \cite{b1}:

\begin{definition}
A pair $\left(L, e\right)$, where $L$ is an operator field and $e$ is a vector field, is called  a {\it Nijenhuis operator with a unity}, if $L$ is  Nijenhuis and $e$ satisfies the identity
\begin{equation}\label{unitycond}
    \mathcal{L}_{e}L = \Id. 
\end{equation}
\end{definition} 

 Here and below $\mathcal{L}_{v}$ denotes the Lie derivative with respect to a vector field $v$. 

\begin{Theorem}[\cite{b1}] \label{t5}
The class of regular $\mathrm F$-manifolds coincides with the class of Nijenhuis manifolds with a cyclic unity.
\end{Theorem}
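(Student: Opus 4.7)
\emph{Forward direction.} The equivalence is realised through the operator $L := E\circ(\cdot)$. Given a regular F-manifold $(M, \circ, e, E)$, this $L$ is Nijenhuis by \cite[Theorem~4.4]{arsie} and $\gl$-regular by the definition of ``regular''. To see $\mathcal{L}_e L = \Id$, I would first specialise the Hertling-Manin identity \eqref{hertlingmanincond} to $\xi = \eta = e$ (using that $e$ is the unit to collapse terms containing $e\circ(\cdot)$), which yields the derivation property $[e, \zeta\circ\theta] = [e, \zeta]\circ\theta + \zeta\circ[e, \theta]$; and specialise the Euler condition \eqref{eulervectorfieldcond} to $\xi = \eta = e$, which yields $[E, e] = -e$. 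Combining these two facts,
\begin{equation*}
(\mathcal{L}_e L)(\xi) = [e, E\circ\xi] - E\circ[e,\xi] = [e, E]\circ\xi = e\circ\xi = \xi.
\end{equation*}
For cyclicity, note that $L^k e = E^{\circ k}$ (the $k$-fold $\circ$-power, $E^{\circ 0} := e$); since $L$ is $\gl$-regular, its minimal polynomial has degree $n$ at every point, so $\{e, Le, \ldots, L^{n-1}e\}$ is pointwise linearly independent.

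\emph{Reverse direction.} Given $(L, e)$ with $L$ Nijenhuis, $\mathcal{L}_e L = \Id$, and $e$ cyclic, I would set $E := Le$ and define the product on the cyclic basis by $L^i e \circ L^j e := L^{i+j}e$, extended bilinearly (using the Cayley-Hamilton relation to bring indices $\geq n$ back into the cyclic frame). By construction $\circ$ is commutative, associative, has unit $e$, and satisfies $L\xi = E\circ\xi$; $L$ inherits $\gl$-regularity from cyclicity. Verifying the remaining F-manifold axioms relies on two commutation rules on the cyclic frame: iteration of $\mathcal{L}_e L = \Id$ gives $[e, L^k e] = kL^{k-1}e$, and applying $\mathcal{N}_L = 0$ inductively to pairs $(e, L^k e)$ gives $[Le, L^k e] = (k-1)L^k e$, which bootstraps to $[L^i e, L^j e] = (j-i)L^{i+j-1}e$. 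Both axioms are tensorial, so they reduce to checks on cyclic-basis inputs; on such inputs every term becomes an explicit scalar multiple of a single basis vector, and the identities collapse to elementary arithmetic.

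\emph{Main obstacle.} The bulk of the work lies in verifying the nine-term Hertling-Manin identity \eqref{hertlingmanincond}. A conceptually cleaner way to organise this, which I would attempt, is to recast \eqref{hertlingmanincond} as a Nijenhuis-type torsion identity for the family of multiplication operators $L_\xi := \xi\circ(\cdot)$, as suggested by the Ako-Yano bracket interpretation mentioned in the excerpt. In the cyclic frame $L_{L^i e} = L^i$, so the entire identity collapses into a statement about polynomials in the single Nijenhuis operator $L$ and follows from $\mathcal{N}_L = 0$ together with the commutation rules above.
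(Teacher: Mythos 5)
The survey itself offers no proof of Theorem \ref{t5}: it is quoted from \cite{b1}, so there is no in-paper argument to measure you against. Your route --- realising the correspondence through $L=E\circ(\cdot)$, extracting $\mathcal L_e\circ=0$ and $[E,e]=-e$ by specialising the two defining identities to $\xi=\eta=e$, and, conversely, rebuilding the product on the cyclic frame by $L^ie\circ L^je=L^{i+j}e$ with $E=Le$ --- is the natural one and matches the computations the paper alludes to in \S\ref{natural}, where $T_M(L^pe,L^qe)=L^{p+q}e$ and the vanishing of the Ako--Yano bracket are attributed to \cite{b1}. (One caveat on the specialisation: as printed, the second term of \eqref{hertlingmanincond} reads $-[\zeta,\xi\circ\eta]\circ\theta$, which has the opposite sign to the standard Hertling--Manin condition $\mathcal L_{\xi\circ\eta}\circ=\xi\circ\mathcal L_\eta\circ+\eta\circ\mathcal L_\xi\circ$; your derivation property $\mathcal L_e\circ=0$ follows from the standard form, not from the formula taken literally.) Your commutation rules are correct: $\mathcal N_L(e,\xi)=0$ together with $\mathcal L_eL=\Id$ gives $\mathcal L_{Le}L=L$ and inductively $\mathcal L_{L^ke}L=L^k$, whence $[L^ie,L^je]=(j-i)L^{i+j-1}e$. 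The part you defer does genuinely close: on frame inputs $L^ae,L^be,L^ce,L^de$ every term of the nine-term identity is a multiple of $L^{a+b+c+d-1}e$ and the coefficient
$(c+d-a-b)-(c-a-b)-(d-a-b)-(c+d-b)+(c-b)+(d-b)-(c+d-a)+(c-a)+(d-a)$
vanishes identically, while the Euler condition gives $(i+j-1)-(i-1)-(j-1)=1$ as required.

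The one step that does not stand as written is cyclicity in the forward direction. From $\gl$-regularity alone you may conclude that the minimal polynomial of $L(p)$ has degree $n$, hence that \emph{some} cyclic vector exists --- but not that the particular vector $e$ is cyclic (an eigenvector of a $\gl$-regular operator is never cyclic for $n>1$). The missing ingredient is that $L$ is a multiplication operator in a unital commutative associative algebra: associativity gives $L(\xi\circ\eta)=(L\xi)\circ\eta$, so if $p(L)e=0$ for a polynomial $p$ of degree $<n$, then $p(L)\xi=p(L)(e\circ\xi)=\bigl(p(L)e\bigr)\circ\xi=0$ for every $\xi$, contradicting the degree of the minimal polynomial. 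With this observation inserted, your argument is complete.
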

\begin{Remark}{\rm
   Recall that $e$ is said to be {\it cyclic} if $e, Le, \dots   ,L^{n-1}e $ are linearly independent.  This condition automatically  
   guarantees that $L$ is $\gl$-regular.}
   \end{Remark}

Theorem \ref{t5} reduces the study of  regular F-structures to a problem in Nijenhuis geometry,  as condition \eqref{unitycond} is defined in Nijenhuis terms only.

\subsubsection{Symmetries and Nijenhuis operators with unity}

As we recalled above, see Theorem \ref{t5} in \S \ref{sec:Fstructure},     regular F-structure  with cyclic $e$ are in one-to-one correspondence with $\gl$-regular Nijenhuis structures with unity. This approach to F-structures via Nijenhuis geometry appears to be very useful, as many of previously proved results on F-structures were easily reproved using  results of Nijenhuis geometry, and some explicitly stated  open problems on F-structures were solved, see e.g. \cite{b1}.

We suggest to look for further synergy  between  F-structures and Nijenhuis geometry using the concept of symmetries for Nijenhuis operators. Let us recall the definition. 
For a pair of commuting operator fields $L$ and $M$ (i.e., such that $LM = ML$) one can define a $(1, 2)$ tensor field $\langle L, M \rangle$ by
$$
\langle L, M\rangle(\xi, \eta) = M[L\xi, \eta] + L[\xi, M\eta] - [L\xi, M\eta] - LM[\xi, \eta].
$$

This definition is due to A.\,Nijenhuis \cite[formula 3.9]{nijenhuis}. Notice that $\langle L,M\rangle + \langle M,L\rangle$ coincides with the Fr\"olicher-Nijenhuis bracket of $L$ and $M$ and makes sense for all operator fields (not necessarily commuting).  

We say that $M$ is a {\it symmetry}\footnote{The word ``symmetry'' is used because the system of hydrodynamic type corresponding to $M$ commutes with that corresponding to $L$.} of an operator field $L$, if $LM = ML$ and the symmetric (in lower indices) part of $\langle L, M \rangle$ vanishes, that is, $\langle L, M \rangle (\xi, \xi) = 0$ for all vector fields $\xi$. Furthermore, we say that $M$ is  a {\it strong symmetry} of $L$, if $ML = LM$ and the entire tensor $\langle M, L \rangle$ vanishes. 

 For diagonal Nijenhuis operators with different eigenvalues, the symmetries are well understood, and their complete description is  considered as a folklore result. Description of symmetries for non-diagonalisable Nijenhuis operators is new and was obtained in \cite{nij4}, we plan to employ the results of \cite{nij4} in construction and description of F-structures.

Namely,  we want to establish a link between the   algebra of symmetries discussed in \cite{nij4}, and a certain algebraic structure. As we will  see, this idea promises to be quite fruitful. It 
gives us a natural class of commutative associative structures.  Moreover, it allows us to treat F-structures which are regular almost everywhere but not everywhere. 
Recall that in Nijenhuis geometry, those points at which the Segre characteristic of a Nijenhuis operator $L$ changes   are called {\it singular}\footnote{All objects are assumed to be smooth, the word `singular' refers to a point that is not algebraically generic, i.\,e., the Jordan decomposition structure is not constant in a small neighborhood of the point, see \S \ref{Intro}.}; the study of such points is one of the declared goals of Nijenhuis geometry, see \cite{nij1}.

In addition, one might expect the integration in quadratures to appear in this context. We also discuss it below.

\subsubsection{Link between the  algebra of symmetries discussed in \cite{nij4} and a certain algebraic structure}

Let $L$ be a $\gl$-regular Nijenhuis operator. In \cite{nij4} it was shown that
\begin{enumerate}
    \item If $M$ is a symmetry of $L$, then $M$ is a strong symmetry.
    \item If $M$ is a symmetry of $L$, then $M$ is a Nijenhuis operator.
    \item If $M, N$ are symmetries of $L$, then $MN$ is also a (strong) symmetry of $L$.
\end{enumerate}
The symmetries form a commutative associative subalgebra in the infinite dimensional algebra of operator fields. We denote it by $\mathrm{Nij}\, L$ and call {\it Nijenhuis algebra}. It is infinite dimensional, and parametrized, in analytic category, by $n$ functions of a single variable.

If $M\in \mathrm{Nij}\, L$ is a symmetry, then it can be uniquely written as 
\begin{equation}\label{decomp}
 M = g_1 L^{n - 1} + \dots + g_n \operatorname{Id}.   
\end{equation}
The coefficients $g_i$ satisfy a certain system of linear PDEs which  is integrable in quadratures using matrix-valued functions (see \cite{nij4}). In other words, all the symmetries can be constructed using only algebraic operations and integration of closed 1-forms.

We say that $M$ is a {\it regular symmetry} if locally $\ddd g_i$ are linearly independent. The existence of regular symmetries in $\mathrm{Nij}\, L$ is related to the existence of companion coordinates (see \cite{nij4}). This question is, in turn, related to the existence of solutions for some quasilinear systems.

For each symmetry $M \in \mathrm{Nij}\, L$ we can construct a tensor of type $(1, 2)$ as
$$
T_M = \ddd g_1 \otimes L^{n - 1} + \dots + \ddd g_n \otimes \operatorname{Id}.
$$

\begin{Proposition}\label{p1}
Consider the  operation $\circ$ on $T\mathsf M^n$ defined by  $\xi \circ \eta = T_M(\xi, \eta)$. This operation is commutative and associative.    
\end{Proposition}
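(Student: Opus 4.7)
The plan is to rewrite the proposed product as $\xi\circ\eta = A(\xi)\,\eta$, where
$$A(\xi) := \sum_{k=1}^n \ddd g_k(\xi)\,L^{n-k}.$$
For every vector $\xi$, the operator $A(\xi)$ is a polynomial in $L$ with scalar-function coefficients, so $A(\xi)$ automatically commutes with $L$ and with every $A(\eta')$. This polynomial structure is the engine of the whole argument.

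The first step is to reduce associativity to a single operator identity. Since $(\xi\circ\eta)\circ\zeta = A(A(\xi)\eta)\zeta = \sum_k \ddd g_k(\xi)\,A(L^{n-k}\eta)\zeta$ while $\xi\circ(\eta\circ\zeta) = A(\xi)A(\eta)\zeta = \sum_k \ddd g_k(\xi)\,L^{n-k}A(\eta)\zeta$, associativity follows once we prove $A(L^j\eta) = L^j A(\eta)$ for $0\le j\le n-1$, and by induction this reduces to the single relation $A(L\eta) = L\,A(\eta)$. Expanding both sides and using Cayley--Hamilton $L^n = -\sigma_1 L^{n-1} - \cdots - \sigma_n\,\operatorname{Id}$ to handle the $L^{n-k+1}$ term at $k=1$, and invoking the pointwise linear independence of $\operatorname{Id}, L,\dots, L^{n-1}$ that follows from $\gl$-regularity, one sees this relation is equivalent to the system of $1$-form identities
$$
L^{*}\ddd g_k \;=\; \ddd g_{k+1} - \sigma_k\,\ddd g_1 \quad (k=1,\dots,n-1), \qquad L^{*}\ddd g_n \;=\; -\sigma_n\,\ddd g_1.
$$
These are exactly the equations (proved in~\cite{nij4}) that characterise the strong symmetry condition $\langle L,M\rangle = 0$ for $M = \sum g_k L^{n-k}$; since $M$ lies in $\mathrm{Nij}\,L$ by hypothesis, the identities hold and associativity is established. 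As a sanity check, specialising to $g_k = \sigma_k$ recovers~\eqref{eq10}, for which $M=-L^n$ is the trivial symmetry.

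Commutativity then falls out cheaply from the same identity together with $\gl$-regularity. Locally one can pick a cyclic vector field $v$ for $L$, so that $e_i := L^{i-1}v$, $i=1,\dots,n$, is a local frame of $T\mathsf{M}^n$. Iterating $A(L\eta) = L\,A(\eta)$ yields $A(e_i) = L^{i-1}A(v)$, and since $A(v)$ is itself a polynomial in $L$ it commutes with $L^{j-1}$, giving
$$
e_i \circ e_j \;=\; A(e_i)\,e_j \;=\; L^{i-1}\,A(v)\,L^{j-1}v \;=\; L^{\,i+j-2}\,A(v)\,v,
$$
which is manifestly symmetric in $(i,j)$. Because $T_M$ is a $(1,2)$-tensor and hence $C^\infty(\mathsf{M}^n)$-bilinear, this symmetry on the frame extends to arbitrary $\xi,\eta$.

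The hard part will be the invariant bookkeeping behind the equivalence in the associativity step, between $\langle L,M\rangle=0$ and the explicit $1$-form system on the $\ddd g_k$. For $\gl$-regular $L$ this is the substance of the symmetry description in~\cite{nij4}; the equivalence is quite sensitive to $\gl$-regularity, since both the coefficient-by-coefficient extraction of the $1$-form identities and the implication from $A(L\eta)=L\,A(\eta)$ to full associativity rely on having $n$ pointwise linearly independent operators $\operatorname{Id}, L,\dots, L^{n-1}$ available. A direct coordinate verification in first-companion coordinates for $L$ (where the $\sigma_k$ are the coordinate functions themselves) is feasible and produces the same PDEs, but is computationally heavier and less transparent than the invariant route via~\cite{nij4}.
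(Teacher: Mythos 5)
Your proof is correct and follows essentially the same route as the paper: your key operator identity $A(L\xi)=L\,A(\xi)$ is exactly the paper's relation $T_M(L\xi,\eta)=T_M(\xi,L\eta)$ imported from Lemma 2.4 of \cite{nij4}, and both arguments then exploit the polynomial-in-$L$ structure together with a cyclic frame $\xi, L\xi,\dots,L^{n-1}\xi$ to read off commutativity and associativity. Your version merely adds explicit bookkeeping (the Cayley--Hamilton reduction to the $1$-form system on the $\ddd g_k$ and the pointwise linear independence of $\operatorname{Id},L,\dots,L^{n-1}$) that the paper leaves to the citation.
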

\begin{proof}[Sketch of proof]
Lemma 2.4 from \cite{nij4} implies the identity
\begin{equation}\label{sym}
   T_M(L\xi, \eta) = T_M(\xi, L\eta) 
\end{equation}
for all vector fields $\xi, \eta$ and arbitrary symmetry $M$. Let us use $\xi, L\xi, \dots, L^{n - 1}\xi$ for a cyclic basis. Then we get $T_M(L^p\xi, L^q\xi) = R L^{p + q}\xi$, where $R = T(\xi, \cdot)$. Note that $R$ is a  polynomial in $L$ with functional coefficients. Thus, we get $T_M(R\xi, \eta) = T_M(\xi, L\eta)$ from \eqref{sym}. By direct calculations we have
$$
T_M(L^p\xi, T_M(L^q \xi, L^r \xi)) = T_M(L^p\xi, R L^{q + r}\xi) = R^2 L^{p + q + r}\xi = T_M(T_M(L^p\xi, L^q \xi), L^r \xi).
$$
This is exactly the associativity.
\end{proof}

In the definition of $\mathrm F$-manifolds, the important part is vanishing of the so-called Ako-Yano bracket for the commutative associative structure. As we shall see below, this tensor indeed vanishes for the structure given by $T_M$. 

%%%%%%%%%%%
%%%%%%%%%%%
%%%%%%%%%%%
%%%%%%%%%%%
%%%%%%%%%%%

\subsubsection{Application: construction of natural  $\mathrm F$-manifolds via symmetries} \label{natural}

In a neighbourhood of a generic point, the algebra $\mathrm{Nij}\, L$ contains a $\gl$-regular Nijenhuis operator, which in appropriate coordinate system is constant. Assume that $L$ is such an operator. Now pick a regular symmetry $M \in \mathrm{Nij}\, L$. 

\begin{Proposition}\label{p2}
There exists a unique vector field $e$, such that
\begin{enumerate}
    \item $e$ is cyclic for $L$
    \item $\mathcal L_e L = 0$
    \item $\mathcal L_e M = \operatorname{Id}$
\end{enumerate}
\end{Proposition}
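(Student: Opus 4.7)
My plan is to extract, from the three required conditions, the algebraic characterization $e[g_i] = \delta_{i,n}$, which together with the coframe $\{\ddd g_1,\ldots,\ddd g_n\}$ determines $e$ uniquely; and then to verify that this $e$ satisfies all three conditions, using the PDE system the $g_i$ inherit from $\langle L, M\rangle = 0$ together with the constancy of the characteristic coefficients of $L$ (forced by the hypothesis).

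Uniqueness comes almost for free: if $\mathcal{L}_e L = 0$ then $\mathcal{L}_e L^k = 0$ for every $k$, so
\[
\mathcal{L}_e M = \sum_{i=1}^n e[g_i]\, L^{n-i},
\]
and requiring this to equal $\Id$ forces $e[g_i] = \delta_{i,n}$ since $\Id, L, \ldots, L^{n-1}$ are pointwise independent by $\gl$-regularity of $L$. As $M$ is a regular symmetry, $\ddd g_1, \ldots, \ddd g_n$ form a local coframe, so the system $e[g_i] = \delta_{i,n}$ pins down $e$ uniquely. Existence now amounts to verifying that this candidate satisfies the three conditions.

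The key technical input I would derive, by expanding $\langle L, M \rangle = 0$ for $M = \sum g_i L^{n-i}$ and applying Cayley--Hamilton, is the system
\[
L^* \ddd g_i = \ddd g_{i+1} - \sigma_i\, \ddd g_1 \quad (1 \le i \le n-1), \qquad L^* \ddd g_n = -\sigma_n\, \ddd g_1,
\]
which is the symmetry equation underlying the integration-in-quadratures results of \cite{nij4}. Applying it inductively to the condition $e[g_i] = \delta_{i,n}$ yields the Kronecker-type relations $\ddd g_i(L^k e) = \delta_{i, n-k}$ for $0 \le k \le n-1$, so $\{e, Le, \ldots, L^{n-1}e\}$ is the dual frame to $\{\ddd g_n, \ldots, \ddd g_1\}$: this is the required cyclicity.

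For $\mathcal{L}_e L = 0$, observe that all pairings $\ddd g_i(L^j e)$ and $\ddd g_i(e)$ are constants for $0 \le j \le n-1$, so the identity $\ddd g_i([X, Y]) = X[\ddd g_i(Y)] - Y[\ddd g_i(X)]$ immediately yields $[e, L^j e] = 0$; consequently $(\mathcal{L}_e L)(L^j e) = [e, L^{j+1}e] - L[e, L^j e] = 0$ for $0 \le j \le n-2$. For $j = n-1$, Cayley--Hamilton reduces the remaining check to
\[
(\mathcal{L}_e L)(L^{n-1} e) = [e, L^n e] = -\sum_{k=0}^{n-1} e[\sigma_{n-k}]\, L^k e,
\]
which vanishes because $L$ is constant in the chosen coordinates, so its characteristic coefficients are constants and are annihilated by every vector field. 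This last step is the single place where the constancy hypothesis on $L$ is used, and without it the candidate $e$ would generically fail to preserve $L$; this is the main obstacle. Once $\mathcal{L}_e L = 0$ holds, $\mathcal{L}_e M = \Id$ follows from the expansion in the uniqueness argument, and all three conditions are verified.
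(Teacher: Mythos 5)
Your proof is correct, and the uniqueness half is the same as the paper's: both reduce $\mathcal L_e M=\Id$ (given $\mathcal L_e L=0$) to $e[g_i]=\delta_{i,n}$ and invoke the coframe $\ddd g_1,\dots,\ddd g_n$ coming from regularity of $M$. The existence half, however, is organised differently. The paper reduces to a single Jordan block via the Splitting Theorem, passes to Jordan coordinates where $L$ is constant, observes ``after some work'' that $M$ is then Toeplitz, and reads off $e=\partial_n$. You instead stay in the companion coordinates $g_1,\dots,g_n$, take $e$ to be the vector field dual to $\ddd g_n$, and verify all three conditions by hand from the relations $L^*\ddd g_i=\ddd g_{i+1}-\sigma_i\,\ddd g_1$, $L^*\ddd g_n=-\sigma_n\,\ddd g_1$ — which are exactly the statement, also used by the paper, that $L$ takes first companion form in the $g_i$ coordinates (this is the one input you defer to \cite{nij4}, and it is the same input the paper defers to). Your route buys a few things: it avoids both the Splitting Theorem and the Jordan/Toeplitz normal-form computation, it produces the duality $\ddd g_i(L^k e)=\delta_{i,n-k}$ which gives cyclicity for free, and it isolates precisely where the hypothesis that $L$ is reducible to constant form is used (only in killing $e[\sigma_{n-k}]$ in the Cayley--Hamilton step for $(\mathcal L_eL)(L^{n-1}e)$). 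The paper's normal-form route, in exchange, makes the candidate $e$ and the structure of $M$ visible at a glance. Both arguments necessarily produce the same $e$ by the uniqueness you both establish first.
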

\begin{proof}[Sketch of proof]
From Condition (2), we get  $\mathcal L_e M = T_M(e, \cdot)$. Taking $g_i$ from \eqref{decomp} as coordinates, we see that $e$ is uniquely defined. So now we need to prove the existence. Due to the splitting theorem, it is enough to prove it for a Jordan block only.

Now notice that if we pick $g_i$ as coordinates, the operator $L$ is in first companion form. As it can be brought to constant form, the coefficients of its characteristic polynomial are constants (in the case of a $\gl$-regular Nijenhuis operator $L$,  it is equivalent to reducibility of $L$ to a constant form in some coordinates). Thus, $M$ depends linearly on the companion coordinates.

Taking $L$ in Jordan normal form, after some work we can see that $M$ is in Toeplitz form. Then we can take $e$ to be $\partial_n$. It satisfies all the conditions.
\end{proof}

The existence of $e$ implies that $T_M(e, \cdot) = \operatorname{Id}$. This implies that $T_M(L^pe, L^q e) = L^{p + q} e$ and, as shown in \cite{b1}, its Ako-Yano bracket vanishes.

As a result, we get a natural $\mathrm F$-manifold structure around a regular point: 
\begin{enumerate}
    \item tensor $T_M$ defines a commutative associative algebra,
    \item vector field $e$ is its unity,
    \item vector field $M e$ is the Euler vector field.
\end{enumerate}

In particular, the pair $(M, e)$ gives a Nijenhuis structure with unity.

%%%%%%%%%%%
%%%%%%%%%%%
%%%%%%%%%%%
%%%%%%%%%%%
%%%%%%%%%%%

\subsubsection{ Research problems related to symmetries of $\mathrm F$-structures with  unity}

\begin{enumerate}
\item  Let $M$ be a strong symmetry of $L$ and $e$ be a unity. Is it true that $\mathcal L_e M$ is again a strong symmetry of $L$? In the case of a $\gl$-regular Nijenhuis operator around a regular point it is, of course, true.
\item Let $M$ be a symmetry of $L$ and $e$ be a unity. Is it true that  $\mathcal L_e M$ is again a symmetry of $L$? We think it is not, so we need a counterexample.
\item Prove the vanishing of the Ako-Yano bracket in general case, using only tensorial calculations.
\item Is it true that for two different symmetries $M, N$, the operations $T_M, T_N$ are compatible? This is almost obvious and the answer is affirmative.
 
    \item We have constructed an algebra structure which satisfies almost all conditions, except for differential conditions on the unity. Which conditions does the unity satisfy in our case, for example, for a differentially non-degenerate Nijenhuis operator?
\end{enumerate}

%%%%%%%%%%%
%%%%%%%%%%%
%%%%%%%%%%%
%%%%%%%%%%%
%%%%%%%%%%%

\subsubsection{Problems for research:   $\mathrm F$-manifolds}
\begin{enumerate}
    \item Proposition \ref{p2} uses the existence of an operator field $L$ with $\mathcal L_e L = 0$. Is it true that in all known examples, such an operator field exists? It is, of course, non-constant but still.
    \item If the answer to the previous question is affirmative, then we can define an F-type singularity of a Nijenhuis operator as follows: in a neighbourhood of such a point, $\mathrm{Nij}\, L$ contains a Nijenhuis operator, which satisfies $\mathcal L_e L = 0$ (we denote it with the same letter), it is $\gl$-regular and $e$ is cyclic for this operator. Moreover, the structure of the algebra in a neighbourhood of this point is given by the above construction
    \item Are there any left-symmetric algebras that produce such algebras/operators? In other words, is there an intersection of this class of singularities and scalar type singularities. One needs to take a look at the list in dimension three for algebras with functionally independent coefficients of the characteristic polynomial.
\end{enumerate}

%%%%%%%%%%%%
%%%%%%%%%%%%
%%%%%%%%%%%%
%%%%%%%%%%%%
%%%%%%%%%%%%

\subsection{ Frobenius manifolds} \label{sec:Frobenius}

{\bf Definition.}
A pair ($\mathrm F$-structure, 1-form $\alpha$) % better to consider a pair ($F$-stricture, form) than ($F$-stricture, function). They are equivalent only locally. 
is a  {\it  Frobenius structure}, if $\alpha$ satisfies the following  conditions:
\begin{enumerate}
    \item $g( \xi, \nu):= \alpha(\xi \circ \nu)$ is a flat metric (whose connections will be denoted by $\nabla$), 
    \item  $\dd\alpha=0$,
    \item ${\mathcal L}_e g = 0$,
    \item ${\mathcal L}_E \circ = \circ $ \ , \ $ {\mathcal L}_E g= (2- d)g $ (for a certain constant $d$). 
 \end{enumerate}

\vspace{1ex} 

\begin{Theorem}[\cite{HD}, Theorem 8.2.] \label{t6}  For a  regular $\mathrm F$-structure $(L, e)$, there exist  $1$-forms   $\alpha$ such that the corresponding {\rm (}$\mathrm F$-structure,  $1$-form $\alpha${\rm )}  are Frobenius structures. Moreover, the second   jet of $\alpha$ at one point determines the form  $\alpha$. 
\end{Theorem}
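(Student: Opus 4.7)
The plan is to convert the four Frobenius conditions into an overdetermined PDE system on $\alpha$ and show the system is formally integrable with Cauchy data prescribed by the $2$-jet of $\alpha$ at one point. By Theorem \ref{t5}, the regular F-structure gives a $\gl$-regular Nijenhuis operator $L$ (with $L\xi = E\circ\xi$) together with a cyclic unity $e$ satisfying $\mathcal{L}_e L = \Id$. I would work in first companion coordinates $(u^1, \dots, u^n)$ for $L$, in which both $L$ and the structure constants of $\circ$ become algebraic in the $u^i$, and the cyclic frame $e, Le, \dots, L^{n-1}e$ is explicit. Condition (2), $\mathrm{d}\alpha = 0$, lets me write $\alpha = \mathrm{d}f$ locally, so $g(\xi,\nu) = (\xi\circ\nu)[f]$; symmetry is automatic by commutativity of $\circ$, and non-degeneracy reduces to an open condition on the $1$-jet of $f$, since the matrix $\alpha(L^{i+j}e)$ is Hankel in the cyclic frame and invertibility at the basepoint is preserved in a neighbourhood.

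With $\alpha$ reduced to a scalar potential, the remaining conditions translate as follows: flatness of $g$ is a second-order PDE on $\alpha$ (equivalently third-order on $f$), while $\mathcal{L}_e g = 0$ and $\mathcal{L}_E g = (2-d) g$ are first-order in $\alpha$ once the weight $d$ is read off from the action of $\mathcal{L}_E$ on $\alpha$ itself. Using $\mathcal{L}_e L = \Id$, I would treat the flow of $e$ as the distinguished direction along which to prolong, rewriting the full system as a recursive propagation of jets of $\alpha$ transverse to $e$.

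The hard part will be verifying formal integrability. The Hertling-Manin identity \eqref{hertlingmanincond} is, morally, the compatibility condition that makes flatness consistent with associativity of $\circ$, and the Euler identity \eqref{eulervectorfieldcond} plays the analogous role for the scaling condition. Concretely, one must check that differentiating the flatness equation along the cyclic frame produces no new conditions beyond those already forced by \eqref{hertlingmanincond}, \eqref{eulervectorfieldcond}, and $\mathcal{L}_e L = \Id$. I would organise this calculation in the cyclic basis and reduce the curvature expressions to combinations of Nijenhuis-type brackets $\langle L, L^k \rangle$, which vanish by the symmetry results of \cite{nij4}.

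Once integrability is in place, Cauchy-Kovalevskaya in the analytic category (or Cartan-K\"ahler in general) yields a unique $\alpha$ from prescribed initial data on a codimension-$1$ slice transverse to $e$. Transporting this data back to a single point via the compatibility relations, one sees that the residual freedom is exactly the $2$-jet of $\alpha$ at the basepoint: the $0$- and $1$-jets fix the flat metric and the constant of integration for the potential, and the $2$-jet feeds into the first non-trivial slot of the flatness equation, after which higher derivatives are forced. This yields both existence and the stated parametrization, establishing Theorem \ref{t6}.
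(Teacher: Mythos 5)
First, a point of comparison: the paper itself does not prove Theorem \ref{t6}. It is quoted from David--Hertling \cite{HD} (their Theorem 8.2), and the only methodological hint the survey offers is in the paragraph that follows the statement, namely that Conditions 1--4 should be recast as a \emph{first-order system in Cauchy--Frobenius (total) form} on the bundle $\Lambda_2(T^*\mathsf M)\times T^*\mathsf M$, the unknowns being $\alpha$ together with the skew-symmetric form $\nu$ that stores the first derivatives of $\alpha$. So your proposal can only be measured against that intended mechanism and against \cite{HD}.

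Your setup is sound as far as it goes: the reduction to a pair $(L,e)$ via Theorem \ref{t5}, the use of companion coordinates, writing $\alpha=\ddd f$, and the observation that $g(L^ie,L^je)=\alpha(L^{i+j}e)$ is a Hankel matrix are all correct and are the right normalizations. The genuine gap is at the decisive step. You propose to solve by Cauchy--Kovalevskaya or Cartan--K\"ahler with data on a codimension-one slice transverse to $e$ and then to ``transport this data back to a single point via the compatibility relations.'' That transport \emph{is} the content of the theorem, and you have not supplied it: Cauchy data on a hypersurface is a space of functions of $n-1$ variables, hence infinite-dimensional, whereas the theorem asserts that the solution space is finite-dimensional, cut out by a $2$-jet at a single point. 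To obtain the latter you must show that the overdetermined system is of \emph{finite type}: every second derivative of $\alpha$, in every direction (not only along $e$), is expressed algebraically through the $1$-jet of $\alpha$ --- this is exactly what the paper means by ``Cauchy--Frobenius form'' --- and then verify the integrability conditions of the resulting total system, which is where \eqref{hertlingmanincond}, \eqref{eulervectorfieldcond} and $\mathcal L_eL=\Id$ must actually be used. Treating $e$ as a single evolution direction only controls $e$-derivatives and cannot close the system. Relatedly, your claim that the curvature obstructions ``reduce to brackets $\langle L,L^k\rangle$, which vanish'' is asserted rather than derived; the corresponding compatibility computation (carried out in \cite{HD} by first putting the regular F-manifold into its local normal form as a sum of Jordan-block models) is the bulk of the proof and is missing from your sketch. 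Finally, note that the theorem claims only that the $2$-jet \emph{determines} $\alpha$, not that every $2$-jet is realized; your closing paragraph conflates uniqueness with parametrization.
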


Theorem \ref{t6}  implies that   the system of PDEs on $\alpha$ is  a second  order  system in the Cauchy-Frobenius form (all second derivatives are explicitly expressed in terms of the lower derivatives). Note that by the second part  of Property (4), $E$ is a homothety vector field of $g$, so $E_{i,j}=   \nu_{ij} + (2-d) g_{ij}$ for a skew-symmetric 2-form $\nu$.  Actually,   the 2-form   $\nu$ contains the whole information about the first derivatives of the 1-form $\alpha$. 
Therefore,  Conditions 1--4 from the definition of Frobenius manifolds are  a system of PDEs on  the linear vector bundle  which is the direct product $\Lambda_2(T^*\mathsf M)\times T^* \mathsf M$.  

\vspace{1ex}
 Recall that all possible regular  F-structures are constructed (and the freedom is the number of Jordan blocks in the corresponding Nijenhuis operator).  Conditions 2, 3, 4  from the definition of Frobenius manifolds are linear and of second  order in  $F$. 

\vspace{1ex}

A natural approach, from the viewpoint of Nijenhuis geometry, would be (starting from the $\gl$-regular case) to rewrite the system in   a geometric form in terms of $(L,e)$. 

\vspace{1ex}

Theorem \ref{t6}  tells us that regular Frobenius structures  can be viewed as parallel sections of some (most probably, nonlinear) connection. The connection comes   from the F-structure, that is, from  the pair $(L, e)$. One can write them for any pair $(L,e)$. This nonlinear connection is flat, that is, its holonomy group is trivial.

   \begin{Problem} Can one write the system of equations in such a way that its coefficients depend on $L$ only (at least in the $\gl$-regular case)? \end{Problem}
   Note that  in the $\gl$-regular case any two pairs $(L_1,e_1)$ and $(L_2, e_2)$ with identical Jordan blocks of $L_1$ and $L_2$ are isomorphic by a coordinate transformation. Moreover, $\alpha$ is just the dual form for $e$. This  makes us hope that the system can be written in terms of $L$ only.

 \begin{Problem} Are the compatibility conditions for this system precisely the conditions $\mathcal N_L=0$? They must be geometric conditions on $L$ which vanish if $L$ is a Nijenhuis operator. 
  \end{Problem}

The discussion above allows one to reduce the study of regular Frobenius manifolds to the study of pairs $(L,e)$ (plus the choice of $\nu$ and $\alpha$ at one point).

Frobenius manifolds allow us to define solutions of WDVV equation, which is interesting in particular because of their relations to integrable systems.  

 \begin{Problem} Why is the WDVV equation interesting? What are its applications?   Is it possible to reformulate the part which is  interesting for application  by replacing the solution of these equations  with a pair $(L,e)$, at least in the $\gl$-regular case?  What pairs $(L,e)$ appear in known examples of Frobenius manifolds? 
 \end{Problem} 

It is known that the WDVV equation produces a Hamiltonian operator of  order 3 which is not Darboux integrable.  It is known that a Hamiltonian operator of order 3 is essentially the same as a Killing (0,2) tensor  for the flat metric satisfying additional conditions\footnote{See \S \ref{sec:6.1}.}.
The   additional conditions on the tensor is a  nonlinear tensorial expression in the covariant derivatives of the Killing tensor.

 \begin{Problem}  How to write the conditions on this Killing tensor using $L$ or the pair $(L,e)$? If they are fulfilled at one point, are they necessarily fulfilled at all points?  \end{Problem}

\section{Nijenhuis operators: conservation laws and geodesically compatible metrics}

\subsection{Main definitions and basic results} \label{sec:conservation}

Main references are \cite{nij4} and \cite{nijappl5}.
All constructions are local and real analytic.

\begin{definition}
Let $L$ be an operator field (not necessarily Nijenhuis). A function $f$ is said to be a {\it  conservation law }
for $L$, if 
the 1-form $L^*\dd f$ is closed  (in particular, there locally exists a function $g$ such that $\dd g= L^*\dd f$). 
\end{definition}

If $L$ is a Nijenhuis operator,  then 
\begin{itemize}

\item Each conservation law $f=f_1$ generates an hierarchy of conservation laws $f_k$ such that $\dd f_k =L^*\dd f_{k-1} = (L^*)^{k-1} \dd f_1$.

\item If $L=\operatorname{diag}(a_1(u^1), \dots, a_n(u^n))$ with $a_i\ne a_j$  (for $i\ne j$) in a certain coordinate system $u^1, \dots, u^n$, then its conservation laws are all of the form $f=f_1(u^1)  + \dots f_n(u^n)$. 

\item If $L$ is $\gl$-regular and algebraically generic  (i.e.,  locally each eigenvalue has constant mutiplicity) then there is a complete description of conservation laws (see \cite{nij4}).

\item If $L$  is differentially non-degenerate at a point $\mathsf p$ , then every conservation law of $L$ near $\mathsf p$ can be written as $\tr F(L)$, where $F$ is an analytic matrix function (which is well defined locally near  $\mathsf p$). 

\item If $L$ is $\gl$-regular  (but not necessarily algebraically generic so that collisions of eigenvalues are allowed), then locally conservation laws of $L$ are parametrized by $n$ functions of one variable and there always exists a regular conservation law  $f_1$ such that the functions $f_1, \dots, f_n$ from the corresponding hierarchy are functionally independent (i.e. $\dd f_1 \wedge \dd f_2 \wedge \dots \wedge \dd f_n \ne 0$).  In particular, if we take them as local coordinates, then $L$ takes the second companion form
\begin{equation}
\label{eq:Lcomp2}
L_{\mathsf{comp2}} = \begin{pmatrix}
0 & 1 &  & & \\
0 & 0 & 1 & & \\
\vdots & \ddots & \ddots & \ddots & \\
0 & \dots & 0 & 0  & 1\\
\sigma_n & \!\!\!\!\sigma_{n-1} & \dots & \sigma_2 & \sigma_1  \\
\end{pmatrix}
\end{equation}
\end{itemize}

\begin{definition}
A metric $g$ and a $g$-self-adjoint operator $L$ are said to be geodesically compatible\footnote{Explanation of the terminology is as follows: if $g$ and $L$ are geodesically compatible and $L$ is non-degenerate, then $\bar g= \det(L^{-1}) g( L^{-1} \cdot  ,  \cdot  )$ is geodesically equivalent to $g$, that is, every $g$-geodesic, under an appropriate reparameterisation, is a $\bar g$-geodesic.} if
\begin{equation} \label{eq:B1}
    \nabla_\eta L = \frac{1}{2} \Bigl( \eta \otimes \dd \tr L + (\eta \otimes \dd \tr L)^*       \Bigr),
\end{equation}
where $\eta$ is an arbitrary vector field (tangent vector) and $A^*$ denote the $g$-adjoint operator to $A$  (so that in the right hand side we have the $g$-symmetrisation of the operator $\eta \otimes \dd \tr L$). 
\end{definition}

Main properties/facts:

\begin{itemize}

\item  $L$ from the above definition is automatically Nijenhuis  \cite{BM2003}.

\item  $L$ and $g$ are geodesically compatible if and only if $g$ and $\bar g = \frac{1}{|\det L|} g L^{-1}$ 
are geodesically equivalent (i.e. share the same unparametrised geodesics) \cite{BM2003}. This explains the geometric meaning of geodesic compatibility.

\item If $L$ and $g$ are geodesically compatible, then  $L$ and $\tilde g = g f(L)$ are also  geodesically compatible for any function $f$ such that the operator $f(L)$  is defined. Moreover,
$L$ is geodesically compatible with any metric of the form $gM$ where $M$ is a strong symmetry of $L$ \cite{nij4}.

\item  Not every Nijenhuis operator admits a geodesically compatible metric.

\item If $L$ is $\gl$-regular, then there always exists a geodesically compatible metric for it \cite{nijappl5}.

\item If a $\gl$-regular operator $L$ is given in second companion form \eqref{eq:Lcomp2}, then a geodesically compatible metric $g$ can be defined by an explicit formula  \cite{nijappl5}. 

\end{itemize}

\subsection{Research problems}

\begin{Problem}
 Given a Nijenhuis operator $L$, describe all of its (local) conservation laws  (of course, the same question can be asked for any operator, not necessarily Nijenhuis, see below discussion on superintegrable potentials).   
\end{Problem}

\begin{Problem}  Which Nijenhuis operators admit\footnote{The answer is known near almost every points and follows from  the  local description of geodesically compatible pairs in \cite{splitting2, splitting1}. The question is most interesting near singular points of the Nijenhuis operator.} at least one geodesically compatible metric $g$? 
Given $L$, describe all the metrics  geodesically compatible with it.  Under some additional assumptions ($L$ has $n$ different eigenvalues at generic points and  $g$ is Riemannian), the problem was solved in \cite{Matveev2006}. 
The case of metrics of non-Riemannian signatures, even in dimension 2, is still open. The Riemannian case is solved in \cite{BMF}, and the description of 2-dimensional geodesically compatible pairs such that $g$ has signature $(+,-)$ near regular points is in \cite{BMP,japan}.
 \end{Problem} 

More specific tasks are as follows

\begin{Problem}   We have a complete classification of $\gl$-regular Nijenhuis operators in dimension 2 as a list of explicit normal forms.  Describe all conservation laws for each of these operators.  Or at least, find one regular conservation law for each of them.
 \end{Problem} 

Notice that this regular conservation law is expected to be of the form $f_1(\lambda_1) + f_2(\lambda_2)$ where $\lambda_1, \lambda_2$ are the eigenvalues of $L$.  This formula is invariant, but will essentially depend on the type of singularity of $L$.   Recall that we are dealing with singular points at which the eigenvalues collide so that $\lambda_1$ and $\lambda_2$ are not smooth at a given point.  The functions $f_1$ and $f_2$ are not expected to be smooth either, but their combination
$$
f_1(\lambda_1) + f_2(\lambda_2)
$$
must be smooth  (this result is easy to achieve, it is sufficient to take an arbitrary smooth function $f=f_1=f_2$,  then $f(\lambda_1) + f(\lambda_2)$ will be symmetric in $\lambda_1$ and $\lambda_2$ and, therefore, can be expressed as a function $g(\sigma_1, \sigma_2)$ of the coefficients $\sigma_1, \sigma_2$ of the characteristic polynomial which are smooth). In addition, we want $\dd (f_1(\lambda_1) + f_2(\lambda_2))\ne 0$. This condition is tricky, and the above idea does not work as for any smooth function $g(\sigma_1, \sigma_2)$,  we have $\dd g = 0$ because, as a rule, $\dd \sigma_1=\dd\sigma_2=0$ at our singular point. 

Next, if we have a regular conservation law $f$ for $L$, then we should be able to construct a companion coordinate system and then 
get a metric $g$ geodesically compatible with $L$, by using the above mentioned explicit formula from \cite{nijappl5}.

 \begin{Problem} 
 For each $\gl$-regular Nijenhuis operator $L$ in dimension 2, construct (explicitly) a geodesically compatible metric.   \end{Problem} 

 \begin{Problem}  Generalise the above mentioned formula from \cite{nijappl5} to an arbitrary coordinate system $x^1,\dots, x^n$ assuming that in this coordinate system we have a conservation law given as an explicit function $f(x^1,\dots, x^n)$.  \end{Problem} 

 \begin{Problem}   Construct new explicit examples of metrics geodesically compatible with $\gl$-regular operators (at singular points!) in arbitrary dimension.   \end{Problem}

\section{Superintegrable systems and Nijenhuis geometry}

We expect that some ideas from Nijenhuis geometry might be useful in the context of superintegrable systems. This expectation is based on the following three observations:

\begin{itemize}
\item Superintegrable systems are closely related to separation of variables which, in turn, is closely related to Nijenhuis operators  (some people would even say that separation of variables is basically equivalent to the existence of a certain (diagonalisable) Nijenhuis operator)  
\item One of the key properties of a superintegrable system with a Hamiltonian $H = K+V$ (kinetic + potential) and  quadratic integrals $F_i = K_i + V_i$ is that the potential $V$ is a conservation law for  $(1,1)$-Killing tensors related to $K_i$.  In the case of Benenti systems (not necessarily superintegrable), these Killing tensors are closely related to a certain Nijenhuis operator $L$ and the corresponding conservation laws can be found in terms of $L$, see \cite{nij4}. The Killing tensors for superintegrable systems are more complicated, but some ideas may still work.

\item One also may expect an interesting link from superintegrable systems to integrable systems of hydrodynamic type. In the case of diagonalisable systems, the  relation is as follows.  First, by \cite{PSS,Sevennec}, the integrability of a system of hydrodynamic type in the sense of Tsarev, i.e., the existence of symmetries of order one,  is equivalent to the  existence of $n$ functionally independent conservation laws. Second, the existence of a Killing tensor of second order, which can be  simultaneously diagonalised  with the metric, implies the Tsarev integrability and weak nonlinearity \cite{KKM}. 
\end{itemize}

We consider a constant curvature metric  $g$ (it seems that the case of non-zero curvature is simpler).

The problem is to find potentials $V$ such that the Hamiltonian $H =  g^{-1}(p, p) + V(x)$ admits many first integrals of the form
$$
F_i = g^{-1}(K_i p,p) + V_i.
$$ 

The form of $K_i$ is clear.  This is a $(1,1)$-Killing of $g$, since $g$ has constant curvature by our assumptions, 
every $K_i$ is  then a  quadratic forms of Killing vector fields. The only condition to analyse is
$$
\{g^{-1}(p,p), V_i\} = -\{  g^{-1}(K_i p,p), V\},
$$
which is equivalent to the fact that $V$ is a conservation law for $K_i$.  Thus we need to describe those functions $V$ that are conservation laws for many $K_i$'s.    For a given $V$, let $\mathcal K(V)$ denotes the vector space of those $K_i$ for which $V$ is a conservation law.  

\subsection{Questions on Killing tensors and related conservation laws.}

Below is a list of questions/problems/comments which  could clarify the situation.  Some of them, perhaps, are obvious to experts (the authors of this paper do not consider themselves as  experts in this topic).

\begin{itemize}

\item[Q1]  Is it true that in all known examples of superintegrable systems the subspace $\mathcal K(V)$ contains  diagonalisable Killing tensors $K_i$?   What about their   eigenvalues? Can one find such a $K_i$ that all of its eigenvalues  are different?  

\item[Q2]  If so, what kind this $K_i$ is?  Recall that in the case of constant curvature,  
a diagonalisable Killing tensor with different eigenvalues leads to separation of variable. Separation of variables on spaces of constant curvature was described in  \cite{separation}, 
the classifying object is a labeled  in-forest $\mathsf F$. What labeled in-forests  appear within superintegrable systems?

\item[Q3]  If $\mathcal K(V)$ contains a diagonalisable Killing tensor with different eigenvalues, then its conservation laws are all known.  
They are parametrised by $n$ functions of one variable.  In many classical examples these functions are the same (see e.g. formula \eqref{eq:M2} in  \S \ref{sec:2.4.1}).  What happens in the known examples of superintegrable systems? Are these $n$ functions always the same?  

\item[Q4]  Are there any examples with $K_i\in \mathcal K(V)$ being a generic Benenti-Killing tensor  (leading to ellipsoidal coordinates)? Is this a generic case? 

\item[Q5]  Take a generic Killing $K$ of $g$. Does it admit any non-trivial conservation law?

\item[Q6]  If generic Killings admit no conservation laws,  then it is natural to ask for compatibility conditions to the equation
$$
\dd (K^* \dd f)=0.
$$  
Note that in a series of works on algebraic approach to superintegrable systems, e.g. \cite{KSV1, KSV2}, algebraic conditions on the Killing tensor were constructed. Are these conditions just the condition of the existence of sufficiently many conservation laws?

\item[Q7] Again about known examples.  Take $K\in \mathcal K(V)$ which is not Benenti but, on the contrary, is a ``bad'' generic element of $\mathcal K(V)$.  We know that $V$ is a conservation law for $K$.  Does $K$ admit any other conservation laws, or $V$ is unique?    If $K$ admits ``many'' conservation laws, what can we say about them? 

\item[Q8]  Is it true that some quadratic forms of linear Killings turn out to be identically zero  (some sort of Pl\"ucker relations?).  Algebraically these relations can be described as follows.  We consider the momentum mapping
$$
\Phi : T^* S^n  \to so(n+1)^*
$$

Then Killing vector fields  (as functions on $T^* S^n$)  are exactly pullbacks of linear functions $\xi :  so(n+1)^* \to \R$.
Hence Killing tensors of rank 2  (quadratic integrals of $g$) are pullbacks of quadratic functions $F: so(n+1)^* \to \R$, $F=\sum c_{ij} e_i e_j$, where $e_1,\dots, e_n$ is a basis of $so(n)$.  

Among such quadratic functions there are those which vanish identically on the image of the momentum mapping. Namely, the image of $\Phi$ consists of all skew-symmetric matrices $M$ of rank 2.  They can be described by the condition that all diagonal $4\times 4$ minors of $M$ vanish.  These minors are full squares so that the image is defined by means of quadratic relations   $F_{ijkl} :   so(n+1)^* \to \R$ where $i<j<k<l$ determine the corresponding minor.  These relations define trivial quadratic Killings.  And those are the only relations we need.

\item[Q9] 
The question we are interested in can be formulated in more algebraic terms, if we ``transport'' our system to the Lie algebra  $e(n+1)=so(n+1)+\R^{n+1}$. 

The elements of this Lie algebra are pairs $(M, x)$, where $M$ is a skew-symmetric matrix and $x\in \R^{n+1}$ is a column-vector.  The Hamiltonians we need are of the form
$$
H = \tr M^2 + V(x)\quad\mbox{and}\quad  F_i=\tr C_i(M)M + V_i(x),
$$ 
where $C_i: so(n) \to so(n)$ is a symmetric operator.   It would be interesting to see how the collection of operators $C_i$ looks like in known examples.

\item[Q10]  There is a kind of identification of quadratic Killings with abstract curvature tensors. (This interpretation should be related to what was said before:   each symmetric operator $C: so(n+1) \to so(n+1)$ defines a quadratic Killing $K$.   This $C$, as a tensor satisfies all algebraic symmetries of curvature tensor except Bianchi.  It seems that the Bianchi identity will follow from 
 the above discussed relations $F_{ijlk}$).  In any case, it would be interesting to see an interpretation of $\mathcal K(V)$ in terms of abstract curvature tensors.
  
\end{itemize}

\section{Projective invariance of geometric Poisson brackets}

\subsection{Basic definitions} \label{sec:6.1}
We work on the space of real analytic curves  on a real analytic  manifold (see \S \ref{sec:1.1}), and consider the  functionals of the form 
\begin{equation} \label{eq:M8}
    u(x)\mapsto \int \left( \textrm{polynomial in  $u_x$, $u_{xx}$, $\dots $ with coefficients depending on $u(x)$ }\right) \ddd x \, 
\end{equation}
which play the role of a Hamiltonian and commuting integrals in infinite-dimensional integrable systems. 

The polynomial in  $u_x$, $u_{xx}$, $\dots $ with coefficients depending on $u(x)$  will be called  {\it polynomial  density}, we will denote them   by  $\mathcal{H}, \mathcal{F}, \mathcal{G}$ etc. Another common name we will use is {\it differential polynomial,} there is no difference between polynomial  density and differential polynomial.

Let us comment on the interval of integration. For simplicity,  some authors assume that the curves are closed (so $u:S^1 \to M$) and the integral is taken over the whole $S^1$.  The setup imposed later (the so-called ``locality'')   will  imply that the interval of integration does not play any role and only the density matters.  Moreover, densities 
different from each other by a total differential are treated as equivalent,  that is, two densities $\mathcal{F}$ and  $\mathcal{G}$  are equivalent if their difference $\mathcal{F}-\mathcal{G}$ equals $D \mathcal{H}= \tfrac{\ddd}{\ddd x} \mathcal{H}$  for   some density $\mathcal{H}$. A mathematically clean approach can be found\footnote{In dimension $n=1$, the approach is  due to \cite{gdi} and in all dimensions it is known in folklore but we did not find a good source.} in \cite[\S 1.2]{nijapp3}, see also \cite{KV}. Let us give more details. 

Let $\mathfrak{A}$ denote  the algebra of differential polynomials.
 The {\it total $x$-derivative} operation    $D=\tfrac{\ddd}{\ddd x}$  is  defined as follows. One requires  that $D$  satisfies the Leibnitz rule and then defines it on the generators of $\mathfrak{A}$, i.e., on functions $f(u)$   and the $x$-derivatives of $u$  by setting
$$
D(f)= \sum_{i=1}^n \frac{\partial f}{\partial u^i} u^i_x   \quad\mbox{and}\quad     \ D ( u^i_{x^j})= u^i_{x^{j+1}}.
$$ 
Clearly, the operation $D$ increases the differential degree by no more than one.

Next, denote by $\tilde{\mathfrak{A}}$ the quotient algebra $\mathfrak{A}/{D(\mathfrak{A})}$.  The tautological projection $\mathfrak A \to  \tilde{\mathfrak{A}}$ is traditionally denoted by $\mathcal H \mapsto \int \mathcal H \ddd x \in \tilde{\mathfrak{A}}$.
In simple terms it means that we think that two differential polynomials $\mathcal{H}, \bar {\mathcal{H}}$  are  equal, if their difference is a total derivative of a differential polynomial.

Note that by construction, the operation $D$ has the following remarkable property, which  explains its name and  also 
the  	notation  $\tfrac{\ddd}{\ddd x}$ used for $D$. For any curve $c:[a,b]\to U$   and for any  element  $\mathcal{H} \in  {\mathfrak{A}}$ we have: 
\begin{equation}
\label{eq:intro:4}
\tfrac{\ddd}{\ddd x}\left(	\mathcal{H}(\widehat c)\right)= \left(D\mathcal{H}\right)(\widehat c),
\end{equation}
where \begin{equation}
\label{eq:intro:3}
\widehat c:[a,b]\to J^kU\ , \  \     x_0\mapsto \left(c^1,...,c^n,  \tfrac{\ddd}{\ddd x}(c^1),...,\tfrac{\ddd}{\ddd x}(c^n),..., \tfrac{\ddd^k}{\ddd x^k}(c^1),...,\tfrac{\ddd^k}{\ddd x^k}(c^n)\right)_{|{x=x_0}}. 
\end{equation} 
	 		
We will also need another mapping from $\mathfrak{A}$ to the space of $n$-tuples of elements of $\mathfrak{A}$. 
The mapping will be denoted by $\delta$ and will be called {\it the  variational derivative}. 
Its $i^{\textrm{th}}$ component will be denoted by $\tfrac{\delta}{\delta u^i}$ and for an element $\mathcal{H}\in \mathfrak{A}$  it is given by the Euler-Lagrange formula:
$$
 \frac{\delta \mathcal{H}}{\delta u^i}= \sum_{k =0}^\infty (-1)^k D^k\left(\frac{\partial \mathcal{H}}{\partial u_{x^k}^i}\right)
$$
 (only finitely many elements in the sum are different from zero so the result is again a differential polynomial).  It is known, see e.g. \cite{gdi}, that for an element $\mathcal{H}\in \mathfrak{A}$ we have $\delta \mathcal{H} =0$ if and only if $\mathcal H$ is a total $x$-derivative. Then, we see again that the variational derivative does not depend on the choice of a differential polynomial  in the equivalence class of $\mathcal{H}$ in $\mathfrak{A}$.  Thus, the mapping $\delta$ induces a well-defined mapping on $\tilde{\mathfrak{A}}, $ which will be denoted  by the same letter $\delta$.  One can think of $\delta \mathcal{H}$ as a covector  with entries from $\tilde {\mathfrak{A}}$, because the  transformation rule of its entries  under the change of $u$-coordinates is a natural generalisation of  the transformation rule for (0,1)-tensors.

On the space of densities $\tilde{\mathfrak{A}}$  we will consider  the following operation, sending 
the differential polynomials $\mathcal{H}$ of degree $m_1$   and $\mathcal{F}$ of degree $m_2$ to a differential polynomial, denoted by  $\{\mathcal{H}, \mathcal{F}\}$, 
of degree $m_1 +m_2+ k+2$ (where  $k$ will be called the degree of the pairing)  given by 

\begin{equation} \label{eq:M9}
 \{\mathcal{H}, \mathcal{F}\} =   \tfrac{\delta \mathcal{H}}{\delta u^i} \left(  g^{ij} D^k + a^{ij}_p u^p_x D^{k-1} +
 b^{ij}_p u^p_{xx} D^{k-2}+ c^{ij}_{pr}u^p_xu^r_x D^{k-2}+...+ \Gamma^{ij}_su^s_{x^k} \right) \tfrac{\delta \mathcal{F}}{\delta u^j}.     
\end{equation}

The components $g^{ij}, a^{ij}_p,..., \Gamma^{ij}_s$ etc are functions of $u$, one can view them as geometric objects on the manifold. In particular $g^{ij}$ is a $(2,0)$-tensor field. The geometric  nature of the objects $a^{ij}_p, b^{ij}_p, c^{ij}_{pq}$ is more complicated. They are not tensorial objects  but have a more complicated transformation rule under coordinate changes. 

The coefficients  $\Gamma^{ij}_s$  play
a special role. If we lower\footnote{One customary assumes  that $g^{ij}$ is nondegenerate, we will also assume it.} the  first upper index  using $g_{ij}$ and multiply by $-1$,   the obtained object $-\Gamma^i_{jk}$  transforms as Christoffel symbols of an affine connection.

The bilinear operation given by \eqref{eq:M9}  is 
generically   not skew-symmetric and does not satisfy the Jacobi  identity. Both these conditions are in fact conditions on the coefficients  $g^{ij}, a^{ij}_p,..., \Gamma^{ij}_s$.  

\begin{Theorem}[\cite{Doyle}]
    If the  bilinear operation given by \eqref{eq:M9}  is a Poisson bracket, then $-\Gamma^i_{jk}$ is a  symmetric flat connection and $g^{ij}$ is symmetric if $k$ is odd and skew-symmetric if $k$ is even. 
\end{Theorem}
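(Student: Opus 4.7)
The plan is to write the bracket in the form $\{\mathcal{H}, \mathcal{F}\} = \int \frac{\delta \mathcal{H}}{\delta u^i}\, \mathcal{O}^{ij}\, \frac{\delta \mathcal{F}}{\delta u^j}\, \ddd x$, where $\mathcal{O}^{ij}$ is the matrix differential operator that appears in \eqref{eq:M9}. Modulo total $x$-derivatives, skew-symmetry of the bracket in $\tilde{\mathfrak{A}}$ is equivalent to the condition $\mathcal{O}^{ij} = -(\mathcal{O}^{ji})^{*}$ on the formal adjoint, and the Jacobi identity becomes a differential-algebraic system on the coefficients $g^{ij}, a^{ij}_p, b^{ij}_p, c^{ij}_{pr}, \dots, \Gamma^{ij}_s$. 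My approach is to extract the claimed tensorial information order by order from these two sets of conditions.

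First I would settle the statement on $g^{ij}$. The formal adjoint of $g^{ij}(u) D^k$ acting on a test density is $(-D)^k \circ g^{ij}(u) = (-1)^k g^{ij}(u) D^k$ plus terms of differential order strictly less than $k$. Thus the top-order part of $\mathcal{O}^{ij} + (\mathcal{O}^{ji})^{*}$ is $(g^{ij} + (-1)^k g^{ji}) D^k$, and skew-symmetry forces $g^{ij} = (-1)^{k+1} g^{ji}$. Hence $g^{ij}$ is symmetric when $k$ is odd and skew-symmetric when $k$ is even.

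Next I would turn to the connection. Two ingredients are needed here, working in tandem. The first is the transformation rule under a coordinate change $u \mapsto \tilde u(u)$: a direct computation shows that $g^{ij}$ is a genuine $(2,0)$-tensor, while the tail coefficient $\Gamma^{ij}_s$ picks up an inhomogeneous contribution involving $\partial^{2} u / \partial \tilde u \partial \tilde u$; lowering the first upper index with $g$ and flipping the sign, this inhomogeneous contribution is precisely the one required of Christoffel symbols, so that $-\Gamma^i_{jk}$ indeed defines an affine connection $\nabla$. The second ingredient is the skew-symmetry cascade: at each subleading order, $\mathcal{O}^{ij} + (\mathcal{O}^{ji})^{*} = 0$ expresses one of the intermediate coefficients $a^{ij}_p, b^{ij}_p, \ldots$ in terms of $g^{ij}, \Gamma^i_{jk}$ and their covariant derivatives, so that the entire operator is determined by the pair $(g, \nabla)$ together with $k$. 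Once these substitutions are made, I would expand the Jacobi identity. Its leading-order piece (after extensive but routine integration by parts) collapses to the vanishing of the Riemann curvature tensor of $\nabla$, giving flatness; a subleading piece gives the vanishing of the torsion $T^i_{jk} = \Gamma^i_{jk} - \Gamma^i_{kj}$, i.e., symmetry of the connection.

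The main obstacle is the combinatorial bookkeeping. For general $k$, Leibniz-type expansions of $(-D)^k \circ A(u)$ produce many terms and the Jacobi identity generates many more through cross-interactions between different differential orders; organising these into tensorial identities rather than an algebraic mess is where the real work lies. The classical Dubrovin--Novikov case $k=1$ is the tractable prototype, and the cleanest strategy for higher $k$ is to rewrite \eqref{eq:M9} from the outset in a manifestly covariant form using the candidate connection $\nabla$. In such a formulation, coordinate-dependent cancellations become tensorial identities, and the conditions imposed by skew-symmetry and Jacobi can be read off directly as, respectively, the compatibility of $g$ with $\nabla$ (and the parity-based symmetry of $g$) and the vanishing of torsion and curvature.
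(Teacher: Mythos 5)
The paper itself offers no proof of this statement; it is quoted from Doyle's paper \cite{Doyle}, so there is nothing internal to compare against and your sketch has to stand on its own. The part of your argument concerning $g^{ij}$ does: identifying skew-symmetry of the bracket on $\tilde{\mathfrak{A}}$ with $\mathcal{O}^{ij}=-(\mathcal{O}^{ji})^{*}$ and reading off the coefficient of $D^k$ gives $g^{ij}=(-1)^{k+1}g^{ji}$ cleanly (modulo the routine remark that variational derivatives are plentiful enough to force the skew-adjointness of the operator itself, and that one is using the standing nondegeneracy assumption on $g^{ij}$). Likewise, the identification of $-\Gamma^i_{jk}$ as a connection via the transformation rule is fine --- indeed the paper states it as a known fact before the theorem.

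The genuine gap is in the second half. The entire content of the theorem for general $k$ is that the Jacobi identity forces this connection to be torsion-free and flat, and that is precisely the step you defer to ``extensive but routine integration by parts'' and ``combinatorial bookkeeping''. You have not exhibited which coefficients of the Jacobi identity (which jet monomials, at which order of $D$ or of $\delta$-derivatives) isolate the torsion and the curvature, and for arbitrary $k$ this is not routine: it is the theorem. (Even in the prototype $k=1$ case your attribution appears reversed --- there the top-order coefficient of the Jacobi identity yields the symmetry/Levi-Civita condition and the lower-order ones yield $R=0$.) A second, related overstatement: the ``skew-symmetry cascade'' does \emph{not} determine the whole operator from $(g,\nabla)$. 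At each order skew-symmetry constrains only one parity part of the coefficients (e.g.\ it gives $a^{ij}_p-(-1)^k a^{ji}_p = k\,\partial_p g^{ij}$, leaving the other part free), and already for $k=3$ an extra tensor $c^{ij}_{pr}$ survives, subject to a further nonlinear condition, as in Example \ref{ex:M4}; for $k>3$ the classification is open. This overclaim is not needed for the stated conclusions, but it signals that the ``manifestly covariant rewriting'' you propose as the clean route is itself a nontrivial structural result rather than a starting point. To close the gap you would need to carry out, or cite, the explicit extraction of the torsion and curvature from specific coefficients of the Jacobi identity, which is what \cite{Doyle} does.
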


One therefore can work in coordinate systems such that $\Gamma^i_{jk}=0$. Further conditions on the coefficients  $g^{ij}, a^{ij}_p,...,  $ in such a coordinate system lead to more simplifications.

In  the next three examples \ref{ex:M2},\ref{ex:M3} and \ref{ex:M4} we assume that the bilinear operation given by \eqref{eq:M9}  is 
 skew-symmetric and does not satisfy the Jacobi  identity. 

\begin{Ex}[\cite{DN}] \label{ex:M2}
If the degree  $k=1$,  
in the flat coordinate system for the connection $-\Gamma^i_{jk}$,   the components of $g$ are constant (so that $g$ is parallel with respect to  $-\Gamma^i_{jk}$) and \eqref{eq:M9} reads 
\begin{equation} \label{eq:M10}
 \{\mathcal{H}, \mathcal{F}\} =   \tfrac{\delta \mathcal{H}}{\delta u^i}   g^{ij} D   \tfrac{\delta \mathcal{F}}{\delta u^j},
\end{equation}
where $g^{ij}$ is a constant symmetric (nondegenerate) matrix. \end{Ex}

\begin{Ex}[\cite{Po,VV}] \label{ex:M3} 
   If the degree  $k=2$,  
in the flat coordinate system for the connection $-\Gamma^i_{jk}$,   \eqref{eq:M9} reads 
\begin{equation} \label{eq:M11}
 \{\mathcal{H}, \mathcal{F}\} =   \tfrac{\delta \mathcal{H}}{\delta u^i}   Dg^{ij} D   \tfrac{\delta \mathcal{F}}{\delta u^j},
\end{equation}
where $g^{ij}$ is the inverse matrix to a Killing-Yano tensor (with respect to the connection  $-\Gamma^i_{jk}$).  \end{Ex} 
 
 Recall that a skew-symmetric   $(0,2)$-tensor  $g_{ij}$ is Killing-Yano, if its covariant derivative is skew-symmetric. In the index notation, the Killing-Yano equation is 
 \begin{equation} \label{eq:M14}
    g_{i(j,k)}=0,  
 \end{equation}
 where the parentheses denote the symmetrisation. 
 Another equivalent way to write down this condition is as follows: 
 the covariant derivative  of $g_{ij}$ coincides with the exterior derivative.

\begin{Ex}[\cite{FPV1, FPV2}] \label{ex:M4}
   If the degree  $k=3$, the inverse matrix to $g^{ij}$ is a Killing  (0,2) tensor with respect to the connection $-\Gamma_{jk}^i$.
In the flat coordinate system for the connection $-\Gamma^i_{jk}$,  the equation \eqref{eq:M9} reads 
\begin{equation} \label{eq:M12}
 \{\mathcal{H}, \mathcal{F}\} =   \tfrac{\delta \mathcal{H}}{\delta u^i}   D\left(g^{ij}  + c^{ij}_r u^r_x    \right)  D   \tfrac{\delta \mathcal{F}}{\delta u^j}.
\end{equation}
The components $c^{ij}_s$ are essentially covariant derivatives of $g_{ij}$ in the connection  $-\Gamma^i_{jk}$. Moreover, the tensors  $g$ and $ c$  satisfy a certain nonlinear algebraic condition. 
\end{Ex}

\subsection{ Weigthed tensor fields, projectely invariant equations, and 
projective invariance of Poisson  brackets for $k=2,3$}

We first recall the notion of a projective structure.  
A slightly  informal and ineffective definition is as follows: {\it projective structure} on an $n$-dimensional manifold $M$ is a smooth family $\mathcal{F}$ of smooth curves   such that   \begin{itemize}
 \item at any point and in any direction there exists precisely one curve from this family passing through this point in this direction, and   
\item there exists an affine connection $\nabla=(\Gamma^i_{jk})$
 such that each curve from this family, after a proper reparameterisation, is a geodesic of this connection. 
 \end{itemize}

 Note that the equation of geodesics of the connection $\Gamma^i_{jk}$
  and  of its symmetrization $\tilde \Gamma_{jk}^i= \tfrac{1}{2} \left(\Gamma^i_{jk} + \Gamma^i_{kj}\right)$ are clearly the same, since the connection comes symmetrically in the defining equation 
  \begin{equation} \label{geodesic} 
  \ddot \gamma^i + \Gamma_{jk}^i\dot \gamma^k \dot \gamma^j=0\end{equation}  
  of a geodesic; without loss of generality we will 
   therefore always assume that the connections we consider  are torsion-free. 
 
 A simplest example of a projective  structure is the family $\mathcal{F}$ consisting of all straight lines.  Such a projective structure is called {\it flat}. A slightly more complicated example is when we pick 
 any connection $\Gamma^i_{jk}$ and put $\mathcal{F}$ to be  all geodesics of this connection.  Since  there is (up to a reparameterisation) a unique geodesic of a given connection passing through a given point and tangent to a given direction, the second example suggests how one can provide a description of all projective structures: one needs to understand what connections  have the same geodesics viewed as unparameterized curves.  
 This understanding is provided by the following theorem, which was proved at least in   \cite{Levi-Civita1896}; we give the answer in the notation of  \cite{Weyl1921}.   
 
 We call   connections having 
  the same geodesics viewed as unparameterized curves \emph{geodesically (or projectively) equivalent}.

\begin{Theorem}[ \cite{Levi-Civita1896,Weyl1921}]  \label{thm:1}  $\nabla = (\Gamma_{jk}^i)$ is geodesically  equivalent to 
 $ \bar \nabla = (\bar{\Gamma}_{jk}^i)$, if and only if there exists an $1$-form $\phi= \phi_i$ such that 
\begin{equation} 
\bar  \Gamma^i_{jk}= \Gamma_{jk}^i+ \phi_k \delta^i_j + \phi_j \delta^i_k.   \label{ast}
 \end{equation} \end{Theorem}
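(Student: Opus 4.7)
The plan is to reduce the biconditional to a pointwise algebraic statement about the difference tensor $T^i_{jk} := \bar\Gamma^i_{jk} - \Gamma^i_{jk}$. Since both connections are torsion-free, $T$ is a \emph{symmetric} $(1,2)$-tensor field, and the theorem asserts precisely that the values of $T$ are pointwise of the special form $\phi_j\delta^i_k+\phi_k\delta^i_j$ if and only if the two connections share their unparametrised geodesics. So the whole proof splits into (i) a chain-rule calculation producing a pointwise condition on $T$, and (ii) a linear-algebra lemma on symmetric bilinear maps $V\times V\to V$.

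For the easy direction, I would assume $T^i_{jk}=\phi_k\delta^i_j+\phi_j\delta^i_k$ and take an arbitrary $\nabla$-geodesic $\gamma(t)$. Introducing a new parameter $s=s(t)$, applying the chain rule to the $\bar\nabla$-geodesic equation, and eliminating $\ddot\gamma^i$ via $\ddot\gamma^i=-\Gamma^i_{jk}\dot\gamma^j\dot\gamma^k$, the $\bar\nabla$-geodesic condition for the reparametrised curve collapses (after factoring out $\dot\gamma^i$) to the scalar ODE
\begin{equation*}
s''(t)=2\,\phi_k(\gamma(t))\,\dot\gamma^k(t)\,s'(t),
\end{equation*}
which is always locally solvable for $s(t)$. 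Hence the same unparametrised curves are $\bar\nabla$-geodesics, proving this implication.

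For the nontrivial direction, running the same chain-rule computation in reverse shows that if every $\nabla$-geodesic admits a reparametrisation making it a $\bar\nabla$-geodesic, then at each $p\in M$ and for every $v\in T_pM$ there must exist a scalar $\lambda(v)$ with $T_p(v,v)=\lambda(v)\,v$; this uses only the fact that $\nabla$-geodesics through $p$ exist in every direction, so every $v$ is realised as an initial tangent vector. The heart of the proof is then a purely algebraic lemma at a single point: a symmetric bilinear map $T\colon V\times V\to V$ on a real vector space satisfying $T(v,v)\parallel v$ for every $v$ must have the form $T(v,w)=\phi(w)\,v+\phi(v)\,w$ for some linear form $\phi\in V^{*}$.

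I expect this lemma to be the main obstacle; once it is in hand, the rest is bookkeeping. To prove it, I would first note that $T(\alpha v,\alpha v)=\alpha^{2}T(v,v)$ forces $\lambda(\alpha v)=\alpha\,\lambda(v)$, so $\lambda$ is homogeneous of degree one. Then, for linearly independent $v,w$, I would expand $T(v+tw,v+tw)=\lambda(v+tw)(v+tw)$ in the basis $\{v,w\}$ and compare coefficients identically in the real parameter $t$: writing $T(v,w)=a\,v+b\,w$, the coefficient of $v$ yields $\lambda(v+tw)=\lambda(v)+2ta$, and the coefficient of $w$ then forces both $b=\tfrac12\lambda(v)$, $a=\tfrac12\lambda(w)$, and the additivity $\lambda(v+w)=\lambda(v)+\lambda(w)$. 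Combined with homogeneity, this makes $\lambda$ linear; setting $\phi:=\tfrac12\lambda$ yields the desired representation of $T$. Since $p\in M$ was arbitrary, this produces $\bar\Gamma^i_{jk}-\Gamma^i_{jk}=\phi_k\delta^i_j+\phi_j\delta^i_k$ as a tensor equation on $M$, with $\phi$ a well-defined global $1$-form.
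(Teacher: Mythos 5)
The paper does not prove this theorem: it is quoted as a classical result with a citation to Levi-Civita and Weyl, so there is no internal proof to compare against. Your argument is correct and is essentially the standard one: both connections are torsion-free, so the difference $T^i_{jk}=\bar\Gamma^i_{jk}-\Gamma^i_{jk}$ is a symmetric $(1,2)$-tensor; the chain-rule computation reduces projective equivalence to the pointwise condition $T_p(v,v)\parallel v$ for all $v\in T_pM$ (using that every direction is realised by a geodesic), and the algebraic lemma on symmetric bilinear maps finishes the job. Your proof of the lemma is sound; the coefficient comparison in $t$ does yield $a=\tfrac12\lambda(w)$, $b=\tfrac12\lambda(v)$ and the additivity of $\lambda$. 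Two small points worth making explicit. First, before writing $T(v,w)=a\,v+b\,w$ you should justify that $T(v,w)\in\operatorname{span}(v,w)$ when $\dim V\ge 3$; this is immediate from $2t\,T(v,w)=T(v+tw,v+tw)-T(v,v)-t^{2}T(w,w)$, since every term on the right is a scalar multiple of a vector in $\operatorname{span}(v,w)$. Second, the smoothness of the resulting $1$-form follows from the trace identity $\phi_j=\tfrac{1}{n+1}\,T^i_{ij}$, which also shows $\phi$ is uniquely determined by the two connections.
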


In the index-free notation, the equation \eqref{ast} reads: 

\begin{equation} \label{ast2}
\bar \nabla_X Y=\nabla_X Y +  \phi(Y)X  + \phi(X)Y. 
 \end{equation}

We see that  the condition that two connections are geodesically  equivalent is quite  flexible:  for a given connection the set  of 
 connections that are geodesically  equivalent to it is an  infinitely-dimensional affine subspace in the affine space of all connections. The freedom in choosing such a connection is the freedom of choosing a 1-form $\phi$  in \eqref{ast}. 
In particular,  two geodesically equivalent connections can coincide at some open nonempty set  and be different on another open nonempty subset.

 This flexibility fails if we work not with connections but with metrics. The condition that a metric  has a  nonproportional   geodesically equivalent one is a very strong condition on the metric. Indeed, by \cite{KM2016,Matveev2012a}, a generic metric does not admit any nonproportional geodesically equivalent partner. 

In particular, a metric geodesically equivalent to the flat projective structure has  a constant curvature: 
\begin{Theorem}[Beltrami Theorem; \cite{Beltrami1865} for dim $2$; \cite{Schur1886} for dim$>2$] 
A metric geodesically  equivalent to a metric of constant curvature has constant curvature. 
\end{Theorem}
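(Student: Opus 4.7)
The plan is to use the projective invariance of the Weyl projective curvature tensor. For a torsion-free affine connection with Riemann tensor $R^i{}_{jkl}$ and Ricci tensor $R_{jl}$, set
$$
W^i{}_{jkl} := R^i{}_{jkl} - \frac{1}{n-1}\bigl(\delta^i_k R_{jl} - \delta^i_l R_{jk}\bigr).
$$
A direct tensor computation substituting \eqref{ast} shows that $W$ depends only on the projective class of the connection: the $\phi$-dependent contributions to $R^i{}_{jkl}$ are of the form $\delta^i_k P_{jl} - \delta^i_l P_{jk}$ with $P_{jl} = \nabla_l\phi_j - \phi_j\phi_l$, and the induced change in $R_{jl}$ is exactly $(n-1) P_{jl}$ after symmetrization, producing the required cancellation in the combination defining $W$.

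First I would verify that $W$ vanishes for any metric of constant curvature $K$. Plugging $R^i{}_{jkl} = K(\delta^i_k g_{jl} - \delta^i_l g_{jk})$ and $R_{jl} = (n-1)K\, g_{jl}$ into the definition gives $W \equiv 0$ in one line. By Theorem~\ref{thm:1}, the Levi-Civita connection $\bar\nabla$ of any metric $\bar g$ geodesically equivalent to $g$ lies in the projective class of $\nabla$, so projective invariance forces $W(\bar\nabla) \equiv 0$ as well.

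It then remains to deduce, in dimension $n \geq 3$, that a pseudo-Riemannian metric $\bar g$ with vanishing Weyl projective tensor has constant curvature. From $W = 0$ one reads off $\bar R^i{}_{jkl} = \tfrac{1}{n-1}(\delta^i_k \bar R_{jl} - \delta^i_l \bar R_{jk})$. Lowering the top index with $\bar g$ and imposing the Riemannian symmetry $\bar R_{ijkl} = \bar R_{klij}$, a contraction with $\bar g^{jk}$ forces $\bar R_{jl} = \lambda\, \bar g_{jl}$ for some function $\lambda$ (the Einstein condition). Substituting back and applying the second Bianchi identity, together with a standard Schur-type trace, gives $\ddd\lambda = 0$, whence $\bar g$ has constant sectional curvature $K = \lambda/(n-1)$.

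The main obstacle is the dimension-two case, where $W$ vanishes identically and the above argument yields no information. Here I would instead exploit the Benenti equation \eqref{eq:B1}: geodesic equivalence of $\bar g$ to $g$ is encoded (via $L = \det(\bar g g^{-1})^{1/(n+1)} \bar g^{-1} g$ viewed as a $g$-selfadjoint operator) by saying that $(g,L)$ is geodesically compatible. A direct two-dimensional computation of the Gauss curvature of $\bar g$ in terms of $L$, using \eqref{eq:B1} together with constancy of the Gauss curvature of $g$, forces constancy of the Gauss curvature of $\bar g$; this is essentially Beltrami's original computation and can alternatively be read off from the local classification of two-dimensional geodesically compatible pairs in \cite{BMF}.
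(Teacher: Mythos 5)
The paper does not prove this statement: it is quoted as a classical result, with \cite{Beltrami1865} and \cite{Schur1886} serving as the proof by reference, so there is nothing internal to compare your argument against. On its own merits, your treatment of the case $n\ge 3$ is correct and is essentially Schur's original argument: invariance of the projective Weyl tensor $W$, its vanishing on constant-curvature metrics, and then $W=0$ $\Rightarrow$ Einstein $\Rightarrow$ (second Bianchi) constant curvature. One small point of care: with your definition of $W$ (antisymmetrising only in the Ricci part), the $\phi$-dependent terms in the curvature also contain a contribution $(\nabla_k\phi_l-\nabla_l\phi_k)\delta^i_j$, so $W$ as written is invariant only when $\ddd\phi=0$; this is harmless here because for two Levi-Civita connections related by \eqref{ast} the $1$-form $\phi$ is exact, namely $\phi=\tfrac{1}{2(n+1)}\,\ddd\ln\bigl|\det\bar g/\det g\bigr|$, but the justification should be said explicitly (otherwise one must use the full projective Weyl tensor with the extra trace term).

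The genuine gap is the two-dimensional case, which you correctly flag but only outsource to ``Beltrami's original computation'' or to a classification result; as it stands that part is a citation, not a proof, and it is precisely the case Beltrami's name is attached to. A clean way to close it in the same projective spirit: in dimension $2$ the invariant replacing $W$ is the projective Cotton tensor $Y_{jkl}=\nabla_k P_{lj}-\nabla_l P_{kj}$ built from the projective Schouten tensor $P_{jl}=\tfrac{1}{n-1}R_{jl}$ (its projective variation is proportional to $\phi_i W^i{}_{jkl}$, hence vanishes when $W\equiv 0$, i.e.\ identically in dimension $2$). For a surface metric $P_{jl}=K g_{jl}$ with $K$ the Gauss curvature, so $Y_{jkl}=(\partial_k K)g_{jl}-(\partial_l K)g_{jk}$, and contracting with $g^{jl}$ shows $Y=0$ if and only if $\ddd K=0$. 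Thus ``constant curvature'' is itself a projectively invariant condition on surface metrics, and the $2$-dimensional case follows at once without invoking \eqref{eq:B1} or any explicit local classification.
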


An important concept which naturally comes in the projective differential geometry and will be useful for us in the consequent is the concept of weighted tensors. We will recall them  following \cite{Eastwood, Matveev2018}.  Note, however,  that the papers \cite{Eastwood} and  \cite{Matveev2018} use different conventions: the weight $k$ in \cite{Eastwood} corresponds to the weight $-k$ in \cite{Matveev2018}. Below, we  follow the convention of  \cite{Eastwood}.

We assume that our manifold  $M$ is orientable (or we work locally) and fix an orientation. The dimension  $n$ is assumed to be $\ge 2$. We consider the bundle 
 $ \Lambda_nM $ of positive volume forms on $M$. 
 Recall that locally  a volume form is a skew-symmetic 
form of maximal order, in local coordinates $x=(x^1,...,x^n)$ 
one  can always write it as  
$f(x) dx^1 \wedge ... \wedge dx^n$ with $f\ne 0$.  The word ``positive''   means that  if the basis 
$\tfrac{\partial }{\partial x^1},..., \tfrac{\partial }{\partial x^n}$ is positively oriented, which we will always assume later,  then $f(x)>0$.

\vspace{1ex}

Positive volume forms are  naturally  organised  in    a locally trivial   1-dimensional  bundle over our manifold $M$ with the structure group $(\mathbb{R}_{>0}, \cdot)$.  
Let us discuss two natural ways for  a local trivialization of this bundle:

\begin{enumerate} \item  Choose a section in this bundle, i.e., a positive  volume form $\Omega_0=f_0  dx^1 \wedge ... \wedge dx^n$ with $f_0\ne 0$. 
Then the other sections of this bundle can be thought to be positive functions on the manifold: the form $f   dx^1 \wedge ... \wedge dx^n$  is then essentially the same as the function $\tfrac{f}{f_0}$.  In particular,   if we change coordinates, the ratio $\tfrac{f}{f_0}$   transforms like a  function, since both coefficients, $f$ and $f_0$, are multiplied by the determinat of the Jacobi matrix.   This way to trivialize the bundle of the volume forms  will be actively used later,  and will be very effective when the volume form $f_0  dx^1 \wedge ... \wedge dx^n$ with $f\ne 0$ is parallel with respect to some preferred   affine connection in the projective class. Note that this trivialization is actually a global one (provided the volume form $\Omega_0$ 
 is defined on the whole manifold).

 \item In local coordinates $x=(x^1,...,x^n)$, we may think that   the volume form 
 $\Omega = f(x)  dx^1\wedge ...\wedge dx^n $ corresponds to the local  function $f(x)$.   In this case, $f(x)$ cannot be viewed as a function on the manifold since  
 its transformation rule is different from that of functions:   a coordinate change  
 $ x= x(y)$  transforms  $f(x)$ to $\det\left(J \right) f(x(y))$, where $  J= \left(\tfrac{dx}{dy}\right)$  is the Jacobi matrix.   
Note that this way to trivialize the bundle can be viewed as  a special case of the previous way, with the form  $ \Omega_0=   dx^1 \wedge ... \wedge dx^n$; though of course this  form $\Omega_0$ depends on the choice of local coordinates.  \end{enumerate}

   Now, take  
    $\alpha \in \mathbb{R}\setminus \{0\}$. Since $t \to t^\alpha$ is an isomorphism of $(\mathbb{R}_{>0}, \cdot)$, 
for any 1-dimensional $(\mathbb{R}_{>0}, \cdot)$-bundle,  its power $\alpha$ is well-defined and is also a one-dimensional bundle. We consider 
$\left(\Lambda_n\right)^\alpha M$. It is an 1-dimensional bundle, so its sections locally can be viewed as functions. Again we have two ways to view the sections as functions:

\begin{enumerate} \item[(A)]  Choose   a volume form $\Omega_0=f_0 dx^1\wedge ... \wedge dx^n$,
 and the corresponding  section $\omega = (\Omega)^\alpha$ of
$ \left(\Lambda_n\right)^\alpha M$. Then  the other sections of this bundle can be thought to be positive functions on the manifold.

 \item[(B)] In local coordinates $x=(x^1,...,x^n)$, we can choose the section  $(dx^1\wedge ...\wedge dx^n)^\alpha$, then the section  
 $\omega = (f(x)  dx^1\wedge ...\wedge dx^n)^\alpha $ corresponds locally to the function $(f(x))^\alpha$. Its transformation rule is different from that of functions:  a coordinate change  
 $ x= x(y)$  transforms  $(f(x))^\alpha$ to $\left(\det\left(\tfrac{dx}{dy}\right)\right)^\alpha  f(x(y))^\alpha$.   
 
 \end{enumerate}

 By a   {$(p,q)$-{\it tensor field of projective weight $k$}} we understand a section of the following bundle:

$$
T^{(p,q)}M\otimes \left(\Lambda_n\right)^{\tfrac{-k}{n+1}} M    \ \ (  \textrm{notation} \  :=  T^{(p,q)}M(k))
$$

\vspace{1ex} 
  If we have a preferred volume form on the manifold, the sections of  $T^{(p,q)}M(k)$  can be identified with 
$(p,q)$-tensor fields. The identification depends of course on the choice of the volume form.    Actually, if the chosen   volume form is parallel w.r.t. to a connection, then even the formula for the 
covariant derivative of this section coincides with that   for tensor fields.

\vspace{1ex} 
     If we have no preferred volume form on the manifold, in a local coordinate system one can choose  $(dx^1\wedge ...\wedge dx^n) $ as the preferred volume, and still think that sections are    ``almost'' $(p,q)$-tensors:  in  local coordinates, they are also given by $n^{p+q}$ functions,  
  but their transformation rule is slightly different from that for tensors: in addition  to the usual transformation rule for tensors one needs to multiply  the result by 
$\left(\det\left(\tfrac{dx}{dy}\right)\right)^\alpha  $ with $\alpha =\tfrac{-k}{n+1}$.   In particular, the formula for Lie derivative is different from that for tensors.  Also the formula for the covariant derivative is different from that for tensors: one needs to take into account the covariant derivative of $(dx^1\wedge ...\wedge dx^n) $. We are going to discuss this right now.

The  bundle of $(p,q)$-weighted tensors of weight $\alpha$  is an associated  bundle to the tangent bundle, 
so a connection $\left(\Gamma^{i}_{jk}\right)$ induces a covariant derivation on it. The next proposition shows how the covariant derivation transforms if we replace the  connection $\left(\Gamma^{i}_{jk}\right)$ by a geodesically  equivalent connection.

\begin{Proposition}  Suppose (geodesically equivalent) connections $\nabla = (\Gamma_{jk}^i)$ and $ \bar \nabla = (\Gamma_{jk}^i)$  are related by \eqref{ast2}.  
  Then the covariant derivatives  of a volume form $\Omega\in \Gamma \left(\Lambda_n M\right) $ in the connections $\nabla $ and $\bar \nabla$ are related by   
 
\begin{equation} \label{cov1}
 \bar \nabla_X \Omega= \nabla_X \Omega - (n+1) \phi(X) \Omega. 
\end{equation}

   In particular, the covariant derivatives of the    section    $ \omega :=  \Omega^{\tfrac{-k}{n+1}} \in \Gamma(\left(\Lambda_n\right)^{\tfrac{k}{n+1}} M)$
 are related by 
\begin{equation} \label{cov2} 
 \bar \nabla_X \omega = \nabla_X \omega + k  \phi(X) \omega.   
\end{equation} 
\end{Proposition}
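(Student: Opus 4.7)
The plan is to reduce both identities to the difference tensor
$$
T(X,Y) := \bar\nabla_X Y - \nabla_X Y = \phi(Y)X + \phi(X)Y,
$$
and then exploit antisymmetry of the volume form in the first case, and the fact that covariant differentiation is a derivation on tensor powers in the second.

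For \eqref{cov1}, I would pick a local frame $e_1,\dots,e_n$ and evaluate both sides on it. Since $\bar\nabla$ and $\nabla$ are both connections on the bundle of $n$-forms, the Leibniz rule over the frame gives
$$
(\bar\nabla_X \Omega - \nabla_X \Omega)(e_1,\dots,e_n) = -\sum_{\ell=1}^{n} \Omega\bigl(e_1,\dots,T(X,e_\ell),\dots,e_n\bigr).
$$
Substituting $T(X,e_\ell) = \phi(e_\ell) X + \phi(X) e_\ell$ splits this into two sums. The $\phi(X)e_\ell$ piece slots $e_\ell$ back into its own position and contributes $-n\phi(X)\Omega(e_1,\dots,e_n)$. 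For the $\phi(e_\ell) X$ piece, the key observation is that by total antisymmetry,
$$
\Omega(e_1,\dots,\underbrace{X}_{\text{pos.\ } \ell},\dots,e_n) = X^\ell\,\Omega(e_1,\dots,e_n),
$$
where $X=\sum_m X^m e_m$; only the $m=\ell$ term survives because of repeated indices. Summing over $\ell$ yields $\sum_\ell \phi(e_\ell) X^\ell \Omega(\dots) = \phi(X)\Omega(\dots)$, contributing another $-\phi(X)\Omega$. Adding gives the claimed $-(n+1)\phi(X)\Omega$.

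For \eqref{cov2}, the covariant derivation $\bar\nabla$ extends canonically to every associated bundle $\bigl(\Lambda_n\bigr)^{\alpha} M$ in a way compatible with the isomorphism $\Omega \mapsto \Omega^{\alpha}$, i.e.\ as a derivation: $\bar\nabla_X \Omega^{\alpha} = \alpha\, \Omega^{\alpha-1}\bar\nabla_X\Omega$, and likewise for $\nabla$. Applying this with $\alpha = -k/(n+1)$ to $\omega = \Omega^{-k/(n+1)}$ and dividing by $\omega$,
$$
\frac{\bar\nabla_X \omega}{\omega} - \frac{\nabla_X \omega}{\omega} = -\frac{k}{n+1}\Bigl(\frac{\bar\nabla_X \Omega}{\Omega} - \frac{\nabla_X \Omega}{\Omega}\Bigr) = -\frac{k}{n+1}\cdot\bigl(-(n+1)\phi(X)\bigr) = k\,\phi(X),
$$
by part \eqref{cov1}. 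Multiplying through by $\omega$ delivers \eqref{cov2}.

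The only nontrivial step is the antisymmetry argument in the first part; the expected obstacle is just confirming that the derivation/chain-rule extension to fractional powers of the volume bundle is the same canonical one for both connections, which is automatic since both connections act on the associated bundle $(\Lambda_n)^\alpha M$ through the same structure-group representation $t\mapsto t^\alpha$ of $(\mathbb R_{>0},\cdot)$. Everything else is the Leibniz rule.
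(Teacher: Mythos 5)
Your proof is correct and follows exactly the route the paper indicates: the paper dismisses \eqref{cov1} as a ``brute force calculation'' and derives \eqref{cov2} from it via the Leibniz rule, which is precisely what you carry out (your frame computation with the difference tensor $T(X,Y)=\phi(Y)X+\phi(X)Y$ and the antisymmetry argument is the brute-force step made explicit, and your chain-rule step on $\Omega^{\alpha}$ is the Leibniz argument). No gaps.
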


The proof of the proposition is straightforward: the proof of \eqref{cov1} can be done by brute force calculations, 
and  \eqref{cov2} follows from \eqref{cov1} and the Leibniz rule.

\begin{definition} We say that a tensorial\footnote{That is, given by tensor operations like contractions, (anti)symmetrisations, tensor products  and covariant derivatives.} 
PDE on   weighted tensors  is projectively invariant, if it does not change when we replace the connection by a geodesically equivalent one. 
\end{definition}

Let us give examples: 

\begin{Ex}[\cite{EM2007}, Theorem 2.5 of \cite{Matveev2018}]\label{ex:M5}     For symmetric $(2,0)$-tensors   $\sigma$  of projective weight $-2$ the operation 
\begin{equation} \label{E4}
\sigma^{ij} \mapsto \sigma^{ij}_{ \ \ , k} - \tfrac{1}{n+1}   ( \sigma^{is }_{ \ \ , s} \delta^j_k +  \sigma^{js }_{ \ \ , s} \delta^i_k)  
\end{equation} 
 is projectively invariant.   In particular, the equation 
 \begin{equation} \label{eq:M13} \sigma^{ij}_{ \ \ , k} - \tfrac{1}{n+1}   ( \sigma^{is }_{ \ \ , s} \delta^j_k +  \sigma^{js }_{ \ \ , s} \delta^i_k)  =0 \end{equation}
 is projectively invariant: if we replace the connection by a geodesically equivalent connection, the equation does not change. 
 \end{Ex}

The equation \eqref{eq:M13}   has the following geometric meaning: its  nondegenerate solutions (for a given connection) 
are essentially the same as metrics  geodesically equivalent to the connection. Namely, the following holds: 

\begin{itemize}  
\item  Suppose  a metric $g_{ij}$ is  geodesically equivalent to the connection used in \eqref{E4}.  Then,  the   $\sigma^{ij}$ given by  the matrix 
\begin{equation} \label{eq:sigma1}
 \sigma^{ij} :=  \left(g^{ij} \otimes \left(  \operatorname{Vol}_g\right)^{\tfrac{2}{n+1}}\right)= g^{ij} |\det g|^{\tfrac{1}{n+1}}
\end{equation}
   is a  solution of \eqref{E4}.  

\item For  a solution  $\sigma= \sigma^{ij}$ of the metrisability  equation  \eqref{E4} such that its determinant in not zero, the corresponding metric geodesically equivalent to the connection  used in \eqref{E4}  is given by 
\begin{equation} \label{eq:sigma}
 g^{ij} := |\det(\sigma)| \sigma^{ij}. 
\end{equation}
\end{itemize}

\begin{Ex}[Folklore]\label{ex:M6}     For skew-symmetric $(0,2)$-tensors   $\sigma_{ij}$  of projective weight $3$ the operation 
\begin{equation} \label{E5-1}
 \sigma_{ij}\mapsto \sigma_{i(j  , k)} 
\end{equation} 
 is projectively invariant.  \end{Ex} 

This of course implies that the equation 
\begin{equation} \label{E5}
 \sigma_{i(j  , k)}=0 
\end{equation} is projectively invariant. 
The equation  has a clear geometric meaning: it is the Killing-Yano equation, see  \eqref{eq:M14}.

\begin{Ex}[Corollary 2.4 of \cite{Matveev2018}]\label{ex:M7}     For  symmetric $(0,2)$-tensors   $\sigma_{ij}$  of projective weight $4$ the operation 
\begin{equation} \label{E6-1}
 \sigma_{ij}\mapsto \sigma_{(ij  , k)} 
\end{equation} 
 is projectively invariant.
\end{Ex}
Therefore, the equation 
\begin{equation} \label{E6}
 \sigma_{(ij  , k)}=0 
\end{equation} 
is projectively invariant. This equation  has a clear geometric meaning: it is the so-called Killing
equation and its solutions are Killing tensors.

Let us compare Examples \ref{ex:M2} and \ref{ex:M5}, Examples \ref{ex:M3} and \ref{ex:M6},  and Examples  \ref{ex:M4} and \ref{ex:M7}. We see that the geometric object inside is the same: 
metric in Examples \ref{ex:M2} and \ref{ex:M5}, Killing-Yano tensor in Examples \ref{ex:M3} and \ref{ex:M6},  and Killing tensor in Examples \ref{ex:M2} and \ref{ex:M5}. This may imply that  geometric Poisson bracket are 
 projectively invariant, and it is indeed the case for geometric Poisson brackets of degree $2$ and $3$.
 For the simplest case $k=1$, it is known in the folklore that the projective invariance fails,  though the object inside, the tensor $g^{ij}$,  is a projectively invariant object. Indeed, it corresponds to solutions of projectively invariant metrisation equation. 
 For $k=2, $ the projective invariance of geometric Poisson brackets was  shown in \cite{VV}. For $k=3$ it was shown in \cite{FPV1, FPV2}. 

Note also that the additional condition on the Killing tensor, 
the nonlinear condition on $c_{ijk}$ mentioned in Example \ref{ex:M4}, 
is projectively invariant, it does not depend on the choice of geodesically equivalent connection.

\subsection{Questions related to interplay of projective differential geometry and geometric Poisson structures } 

\begin{Problem}
  Are geometric  Poisson  structures of any   order  greater than 1 projectively invariant?  Can one interpret the leading coefficient $g^{ij}$ as a projectively invariant object? 
\end{Problem}

Note that the weight of $g^{ij}$ must necessarily be $-(k+1)$. 

The projective invariance of an equation is a very strong condition, most covariant equations are not  projectively invariant. Would geometric Poisson structures or at least the corresponding  $g^{ij}$  be projectively invariant, one could use the developed machinery of parabolic Cartan geometry in the theory of geometric Poisson brackets. 

Note also that the metrisability, Killing-Yano and Killing   equations discussed above are in a certain sense simplest projectively invariant equations;  in  the projective differential geometry they  are linked to  the so-called BGG machinery.  
One may hope that a projectively invariant equation on a skew-symmetric tensors $g_{ij}$  of 
projective weight $ 5 $ (or on symmetric tensors   $g^{ij}$ with projective weight $-5$) 
will provide   new examples of geometric Poisson brackets of degree 4. Note  though that  unfortunately there is no linear projectively invariant equations on skew-symmetric tensors  $g_{ij}$ of   weight $5$.  But we do not know any evident  
reason  why the linearity should be fulfilled. 
Of course,  the following analogy to the finite-dimensional Poisson brackets may hold: In the nondegenerate case, Poisson bracket and symplectic form are essentially equivalent objects, as one is given by the inverse  matrix of the other. The differential geometric condition on the skew-symmetric (2,0) tensor $P^{ij}$ corresponding to the Jacobi identity is nonlinear:
$$
P^{k\ell}\tfrac{\partial }{\partial x^\ell}P^{ij} + P^{i\ell}\tfrac{\partial }{\partial x^\ell}P^{jk} + P^{j\ell}\tfrac{\partial }{\partial x^\ell}P^{ki}=0. 
$$
The differential geometric condition on the skew-symmetric $(0,2)$ tensor $\omega_{ij}$ corresponding to the closedness  is linear.  But we were not able to extend this analogy to an infinite-dimensional case. On  the other side, the projective changes of the connection we consider are  very special, as the projectively equivalent connection $ \bar \Gamma^i_{jk}$ is also flat, that is, its curvature is zero.

For the geometric infinite-dimensional Poisson brackets we have a similar phenomenon for  $k=2$ and $k=3$ (since the Killing equation and the Killing-Yano equations are linear). This visual similarity may be  a coincidence, since it fails for $k=1$ as the metrisability equations viewed as equations on weighted (0,2) tensor are not  linear. Moreover, the Jacobi identity  implies a nonlinear  
condition on the conponents $c^{ij}_k$  from \eqref{eq:M12}.  

\begin{Problem} \label{p11} 
Find  differential condition on  skew-symmetric tensors  $g_{ij}$ of weight 5 which is invariant  under projective change of the connection leaving the connection flat.    
\end{Problem}

Note that in the case $k>3$, the  description of geometric Poisson brackets is  not known and  sometimes is considered to be out of reach.  There are merely a very short list of examples and a conjecture of 
 Mokhov  which can be equivalently reformulated as a projectively invariant form of all geometric homogeneous Hamiltonian structures of order $\ge 2$.   A solution of Problem \ref{p11} can lead to fist nontrivial  examples with $k=4$.  

Note also that if a geometric Poisson structure of order $k$ were projectively invariant, then the weight of the 
 corresponding  $g^{ij}$  would necessarily be equal to $-(k+1)$.

\begin{Problem}
   Certain Frobenius manifolds seem to give   examples  of geometric Poisson bracket of order 3 
   (at least, the WDVV equation does it). Is Frobenius structure projectively invariant? Can one replace, in the definition of Frobenius structure, the flat metric by a metric of constant curvature? 
\end{Problem}

\begin{Problem}
  As we mentioned above, geometric Poisson brackets of order 3  are essentially the same as Killing  tensors whose covariant derivative satisfies a certain relation, and certain Forbenius manifolds were used to construct such Killing tensors.  Does     there exists a construction of a Killing tensor by a Frobenius manifold? 
\end{Problem}

\begin{Problem}
Can two   different  Poisson structures of order 3, one of which is not Darboux-Poisson,  be    compatible?  
\end{Problem}

Let us recall the notion  of `Darboux-Poisson structure' introduced in \cite{Doyle}.  The Poisson structure \eqref{eq:M12}  of order 3 is {\it Darboux-Poisson} if   the coefficients  $c^{ij}_p$ are zero. The latter condition also  implies that 
the coefficients   $g^{ij}$ are constants.

Compatible Darboux-Poisson structures of order 3 were studied in \cite{nijapp3}.  It was shown there
 that 
  the connections $\Gamma^i_{jk}$ corresponding to these two Poisson structures coincide so the corresponding 
	metrics $g_{ij}$  are affinely equivalent. In particular, the operator connecting these two metrics is  Nijenhuis.   
	
	We hope that Nijenhuis operators will appear in a similar manner also in the general, not necessarily Darboux-Poisson case, and their studies will help to understand the situation. Our expectation is that two Poisson structures of order 3 which are not Darboux-Poisson  cannot be compatible.

\begin{Problem} \label{prob:6.5} 
Describe all geometric Poisson structures of the first order compatible with a geometric Poisson structure of the third order which is not Darboux-Poisson. How big can the compatible pencil of such first order  structures be? 
\end{Problem} 

Since geometric Poisson structures of the third order are projectively invariant, all other objects which are compatible with them in any geometric sense should behave nicely with respect to projective transformations. We suggest to employ this in the investigations related to Problem \ref{prob:6.5}.

\subsection*{Acknowledgements.} V.M. thanks the DFG (projects 455806247 and 529233771), and the ARC Discovery Programme DP210100951 for their support. A part of the work was done during the preworkshop and workshop on Nijenhuis Geometry and Integrable Systems at La Trobe University and the MATRIX Institute. The participation of A.K. and V.M. in the workshop was supported by the Simons Foundation, and the participation of A.K. in the preworkshop was partially supported by the ARC Discovery Programme DP210100951. V.M.  thanks the Sydney Mathematics  Research Institute for hospitality and for partial financial support during his visits in  2023 and 2024.

\printbibliography

\end{document}